\documentclass{amsart}
\usepackage{amsmath,amsfonts,amsthm}
\usepackage{amssymb,latexsym}
\usepackage[colorlinks]{hyperref}
\usepackage{graphicx}
\usepackage[all]{xy}
\usepackage{layout}

\usepackage[colorlinks]{hyperref}
\usepackage{graphicx}
\usepackage[all]{xy}
\usepackage{layout}

\begin{document}

	\theoremstyle{plain}
	\newtheorem{theorem}{Theorem}[section]
	\newtheorem{lemma}[theorem]{Lemma}
	\newtheorem{proposition}[theorem]{Proposition}
	\newtheorem{corollary}[theorem]{Corollary}
	
	\theoremstyle{definition}
	\newtheorem{comment}{Comment}[section]
	\newtheorem{definition}[theorem]{Definition}
	\newtheorem{example}[theorem]{Example}
	\newtheorem{counterexample}[theorem]{Counterexample}
	\newtheorem{problem}[theorem]{Problem}
	\newtheorem{note}[theorem]{Note}
	
	\theoremstyle{remark}
	\newtheorem*{notation}{Notation}
	\newtheorem*{convention}{Convention}
	\newtheorem*{remark}{Remark}
	\numberwithin{equation}{section}
	
	\newcommand{\W}{$A\otimes\mathcal{W}\ $}
	\newcommand{\A}{$\mathcal {A}$}
	\newcommand{\R}{$\mathcal {R}$}
	\newcommand{\M}{\mathbb M_{n}}
	\newcommand{\N}{\mathbb M_{N}}
	\newcommand{\NN}{\mathbb M_{N'}}
	\newcommand{\MMM}{\mathbb M_{k(n)}(\mathfrak A^{**})}
	\newcommand{\MM}{\mathbb M_{k(n)}}
	\newcommand{\C}{$C^*$}
	
	\newcommand{\Lrightarrow}{\hbox to1cm{\rightarrowfill}}
	\newcommand{\Ldownarrow}{\bigg\downarrow}

	\title[Fourier Algebra  of Fell Bundles]{Fourier and Fourier-Stieltjes Algebra  of Fell Bundles over discrete groups}
	
	\author[ M. Amini  \MakeLowercase{and} M. R. Ghanei ]{Massoud Amini
		\MakeLowercase{and} Mohammad Reza Ghanei}

	\address{Department of Mathematics, Khansar Campus, University of Isfahan,  Iran.}
	\email{\textcolor[rgb]{0.00,0.00,0.84}{m.r.ghanei@khn.ui.ac.ir ,
			mrg.ghanei@gmail.com}}
	
	\address{%
		Faculty of Mathematical Sciences\\ Tarbiat Modares University\\ Tehran 14115-134\\ Iran\newline
		School of Mathematics
		Institute for Research in Fundamental Sciences (IPM)\\ Tehran 19395-5746\\ Iran}
	
	\email{\textcolor[rgb]{0.00,0.00,0.84}{mamini@modares.ac.ir, mamini@ipm.ir}}

	
	
	
	
	
	\subjclass[2010]{Primary 46L05; Secondary 46M20}
	
	\keywords{Fell bundle, Fourier algebra, Fourier-Stieltjes algebra, action, coaction}
	
	\begin{abstract}
		For a Fell bundle $\mathcal{B}=\left\{B_{s}\right\}_{s \in G}$ over a discrete group $G$, we use representations theory of $\mathcal{B}$ to construct the 	Fourier and  Fourier-Stieltjes spaces $A(\mathcal{B})$ and  $B(\mathcal{B})$ of $\mathcal B$. When $\mathcal B$ is saturated we show  $B(\mathcal{B})$ is canonically isomorphic to
		the   dual space of the cross sectional $C^{*}$-algebra $C^{*}(\mathcal{B})$ of $\mathcal{B}$.
		When there is a compatible family of co-multiplications on the fibers we show that $B(\mathcal{B})$ and $A(\mathcal{B})$
		are Banach algebras. This holds in particular if either the fiber $B_e$ at identity is a Hopf $C^*$-algebra or $\mathcal{B}$ is the Fell bundle of a $C^*$-dynamical
		system.
		When $A(\mathcal{B})$ is a Banach algebra with bounded approximate identity,  we show that $B(\mathcal{B})$ is the
		multiplier algebra of $A(\mathcal{B})$. We prove a Leptin type  theorem by showing that amenability of $G$
		implies  the existence of bounded approximate identity for  $A(\mathcal{B})$ for bundles coming from a $C^*$-dynamical system $(A,G,\gamma)$. The converse is left as an open problem.

	\end{abstract} 
	
	\maketitle

	\section{Introduction}
	The Fourier algebra $A(G)$ and Fourier-Stieltjes algebra $B(G)$  of a  locally compact group $G$ are defined and studied by Pierre Eymard in \cite{Eymard64} and play an important role in modern theory of abstract harmonic analysis (see, for instance \cite{KaLa18}). The Fourier-Stieltjes algebra is the collection of all
	coefficient functions  of
	continuous unitary representations   of $G$, whereas the Fourier algebra $A(G)$
	consists of coefficient functions of the left regular representation. The Fourier-Stieltjes algebra $B(G)$ admits a
	natural norm making it into a commutative Banach $*$-algebra under pointwise multiplication.
	The Fourier
	algebra $A(G)$ as a closed ideal of $B(G)$. When $G$ is amenable, $B(G)$ is the multiplier algebra of $A(G)$; for more details, see \cite{KaLa18}

	Fell bundles were introduced in the late 60's \cite{Fell69} by James Fell to provide a framework for  understanding and extending George Mackey's theory of induced representations \cite{M68}.
	We refer the reader to \cite{FeDo88}  for the basic theory of Fell bundles and to \cite{Exel17} for a concise treatment and of
	this and other important concepts.
	
	Roughly speaking, a Fell bundle (also known as a $C^{*}$-algebraic bundle)
	over a discrete group $G$ is a collection
	$\mathcal{B}$ of closed subspaces of a $C^{*}$-algebra indexed by $G$, such that multiplication and involution in fibers follow the group rules. Now, just as in group $C^*$-algebras, one could use representation theory of the $*$-algebra of finitely supported sections of the bundle to associate full and reduced cross sectional $C^*$-algebras  $C^{*}(\mathcal{B})$ and  $C^{*}_r(\mathcal{B})$. For the trivial bundle (with fibers $\mathbb C$ everywhere) these are the same as full and reduced group $C^*$-algebras  $C^{*}(G)$ and  $C^{*}_r(G)$. Now since the Fourier-Stieltjes algebra $B(G)$ as a Banach space is nothing but the  dual space of the full group $C^*$-algebra $C^*(G)$, it is natural to expect that ``coefficients'' of representations of a Fell bundle $\mathcal B$ form a vector space, whose completion (in an appropriate norm) is identified with the dual space of $C^{*}(\mathcal{B})$. Moreover, in the group case, positive definite functions on the group canonically associate with positive functionals on $C^*(G)$, and so the next thing to look at would be the order structure on the positive cone of  $C^{*}(\mathcal{B})$ and the corresponding positive functionals.
	Indeed, Fell and Doran already initiated studying  properties of the positive cone of a Fell bundle in their original monograph \cite{FeDo88} and gave a ``module structure'' on (the linear span of) positive definite functionals on the bundle
	over (the linear span of) positive definite functions on the base group.
	
	The other problem to deal with is whether the  coefficients of representations of $\mathcal B$ form an algebra, or equivalently, whether the product of two positive definite functionals could be defined, and happens to be a positive functional (or at least inside their linear span).
	In the group case, the algebra structure on the dual $B(G)$ of full group $C^*$-algebra
	$C^*(G)$ comes from the co-algebra structure on $C^*(G)$ given by the canonical  co-product $\Delta:C^*(G)\rightarrow C^*(G)\otimes C^*(G)$. Such a co-algebra structure is not available for general Fell bundles, and could be expected only for Fell bundles associated to group actions on $C^*$-bi-algebras.
	
	In this paper we define and study Fourier and Fourier-Stieltjes spaces of a Fell bundle. We are particular interested in the spacial case where these are Banach algebras. In this latter case, we investigate the existence of a bounded approximate identity in the Fourier algebra of a Fell bundle and relate it to identifying its multiplier algebra. We study the special cases of bundles associated to group actions in somewhat more details. Similar construction is done by Bedos and Conti in \cite{BedCon16}.
	
	Most of the material of this paper could be extended (without considerable difficulty) to Fell bundles over locally compact groups, but we feel that at this stage (of developing a new theory), this level of generality is not necessary. Moreover, certain natural settings in which Fell bundles are motivated and discussed are only developed in the discrete case (see for instance \cite{Q96}).

	The paper is organized as follows. In Section \ref{2}, we give a brief overview of Fell bundles over
	discrete groups, fixing basic notations used  in the rest of this  paper.
	In Section \ref{3}, we define the Fourier-Stieltjes Space $B(\mathcal{B})$ of a
	Fell bundle $\mathcal{B}$ over a discrete group $G$. We show that $B(\mathcal{B})$ is a dual Banach space, and
	when $\mathcal B$ is saturated, we observe  that
	$B(\mathcal{B})$ is canonically isomorphic to the dual of cross sectional $C^*$-algebra $C^{*}(\mathcal{B})$ of $\mathcal{B}$.
	In Section \ref{4}, we find general conditions under which   $B(\mathcal{B})$ and  $A(\mathcal{B})$ are Banach algebras, and
	determine when the multiplier algebra of $A(\mathcal{B})$ is $B(\mathcal{B})$.

	\section{Preliminaries}\label{2}	
	Throughout this paper we work with a discrete group $G$, with unit element $e$. We denote  the set of all linear continuous maps between Banach spaces $X$ and $Y$ by $\mathcal{L}(X, Y)$. We denote $\mathcal{L}(X, X)$ by $\mathcal{L}(X)$.
	
	\begin{definition}
		A {\it Fell bundle} (or a $C^{*}$-algebraic bundle)
		over a discrete group $G$ is a collection
		$
		\mathcal{B}=\left\{B_{s}\right\}_{s \in G}
		$
		of closed subspaces of a $C^{*}$-algebra $B$, indexed by $G$,
		satisfying $B_{s} B_{t} \subseteq B_{s t}$ and $B_{s}^{*}=B_{s^{-1}}$, for  $s, t \in G$.	The disjoint union of fibers is called the {\it total space} of $
		\mathcal{B}$ (again denoted by $\mathcal{B}$). A {\it section} of $
		\mathcal{B}$ is a map $y$ from $G$ to the total space of $
		\mathcal{B}$ with $y_s:=y(s)\in B_s$, for $s\in G$.
	\end{definition}
	
	A Fell bundle
	$
	\mathcal{B}=\left\{B_{s}\right\}_{s \in G}$ is said to be {\it saturated} if the linear span $B_sB_t$ of all multiples of elements of $B_s$ in that of $B_t$ is dense in $B_{st}$, for each $s,t\in G$ \cite[VIII.2.8]{FeDo88}. For elementary examples of saturated bundles, see \cite[VIII.3.15-16]{FeDo88}.
	
	The set of all finitely supported (respectively, countably summable) sections of $
	\mathcal{B}$ is denoted by
	$c_c(\mathcal{B})$ (respectively, by $\ell^1(\mathcal{B})$). Taking supremum over all $C^*$-seminorms on $c_c(\mathcal{B})$ one gets a $C^*$-seminorm $\|.\|_{\rm max}$ on $c_c(\mathcal{B})$, and the completion of the quotient by norm zero elements is a $C^*$-algebra $C^*(\mathcal{B})$, called the {\it cross sectional} $C^*$-{\it algebra} of $\mathcal{B}$. By a similar construction using the {\it regular representation} of $\mathcal{B}$, one could construct the {\it reduced cross sectional} $C^*$-{\it algebra} $C^*_r(\mathcal{B})$ of $\mathcal{B}$; for more details see, \cite[17.6]{Exel17}.
	
	\begin{example}\label{bundle}
		Let us exemplify the above notions in certain basic cases.
		
		\vspace{.2cm}
		$(i)$ Let $G$ be a discrete group. Consider the trivial bundle over $G$, i.e.,
		$\mathcal{B}=G\times\mathbb{C}=\{B_s\}_{s\in G}$, where $B_s=\{(s,z)|z\in\mathbb{C}\}$, with operations
		$(s,z)(t,w)=(st,zw)$, $(s,z)^*=(s^{-1},\bar{z})$ and norm $\|(s,z)\|=|z|$.	
		Then canonically,  $c_c(\mathcal{B})\cong c_c(G)$, $\ell^1(\mathcal{B})\cong \ell^1(G)$,   $C^*_r(\mathcal{B})\cong C^*_r(G)$, and $C^*(\mathcal{B})\cong C^*(G)$.	
		
		\vspace{.2cm}
		$(ii)$ Let $(A,G,\gamma)$ be a $C^*$-dynamical system. For the fibers
		$B_s=\{s\}\times A$,	with operations $(s,a)(t,b)=(st,a\gamma_s(b))$, $(s,a)^*=(s^{-1},\gamma_{s^{-1}}(a^*))$,
		and norm $\|(s,a)\|=\|a\|$,  $c_c(\mathcal{B})\cong c_c(G, A)$, $\ell^1(\mathcal{B})\cong \ell^1(G, A)$,  $C^*_r(\mathcal{B})\cong A\rtimes_r G$, and $C^*(\mathcal{B})\cong A\rtimes G$.
		
		\vspace{.2cm}
		
		$(iii)$ A coaction of discrete group $G$ on a $C^*$-algebra $A$ is an injective non-degenerate homomorphism
		$\delta: A \rightarrow$ $A \otimes C^{*}(G)$
		such that $(\delta \otimes \mathrm{id}) \circ \delta=\left(\mathrm{id} \otimes \delta_{G}\right) \circ \delta$,
		where $\delta_{G}: C^{*}(G) \rightarrow C^{*}(G) \otimes C^{*}(G)$ is the canonical co-product defined by
		$\delta_{G}(u_s)=u_s \otimes u_s$, $s \in G$, where $u$ is the universal representation of $G$. With the spectral subspaces  $A_{s}:=\{a \in A: \delta(a)=a \otimes s\}$ as  fibers, $\mathcal A=\{A_s\}$ is a Fell bundle with $A\cong C^*(\mathcal A)$. Conversely, for each Fell bundle $\mathcal A=\{A_s\}$, $G$ coacts on $C^*(\mathcal A)$ via $$\delta: C^*(\mathcal A) \rightarrow C^*(\mathcal A) \otimes C^{*}(G);\ \  \delta(a)=a\otimes \delta_s,$$ for $a\in A_s$, where $\delta_s\in c_c(G)$ is regarded as an element in $C^*(G)$ (cf., \cite[3.3]{Q96}).

	\end{example}

	Given  sections $y$ and $z$ in $c_{c}(\mathcal{B})$, we define their convolution  by
	$$
	(y \star z)_{s}=\sum_{t \in G} y_{t} z_{t^{-1} s} \in B_{s},
	$$
	and the involution  by $y_{s}^{*}=\left(y_{s^{-1}}\right)^{*}$, for  $s \in G .$
	The space $c_{c}(\mathcal{B})$ is an associative $*$-algebra under these operations.

	\begin{definition}
		A \textit{representation} $\pi$ of a Fell bundle $\mathcal{B}=\left\{B_{s}\right\}_{s \in G}$ on
		a Hilbert space $H_\pi$ is
		a map $\pi: \mathcal{B} \rightarrow \mathcal{L}(H_\pi)$ such that, for each $s \in G$, the map
		$$
		\pi_{s}:=\pi(s): B_{s} \rightarrow \mathcal{L}(H_\pi)
		$$
		is linear on each fiber, and for all $s, t \in G, b \in B_{s}$ and $c \in B_{t}$,
		
		\vspace{.2cm}
		($i$) $\pi_{s}(b) \pi_{t}(c)=\pi_{s t}(b c)$,
		
		($ii$) $\pi_{s}(b)^{*}=\pi_{s^{-1}}\left(b^{*}\right)$.
		
		\vspace{.2cm}
		\noindent We always assume that $\pi: \mathcal{B} \rightarrow \mathcal{L}(H_\pi)$ is \textit{continuous}, that is, for every $s \in G$
		and $\xi\in H_\pi$,  the map
		$
		b\in B_{s} \mapsto \pi_s(b)\xi
		$
		is norm-continuous. Since,
		$$
		\|\pi_s(b)\|^2=\|\pi_{s^{-1}}(b^*)\pi_s(b)\|=\|\pi_e(b^*b)\|\leq\|b^*b\|=\|b\|^2,
		$$
		we get, $\|\pi_s(b)\|\leq\|b\|$.
	\end{definition}
	
	\begin{example} There are basic examples of representations.
		
		\vspace{.2cm}
		$(i)$ For the trivial bundle $\mathcal B=G\times\mathbb C$, each  representation $u$ of $G$ gives a representation  $\Pi=\{\pi_s\}$ of the trivial bundle, via $\pi_s(s,z)=u_s$. Conversely, given a representation $\Pi=\{\pi_s\}_{s\in G}$ of the trivial bundle,   $u_s=\pi_s(s,1)$ defines a representation of $G$.	
		
		\vspace{.2cm}
		$(ii)$ Given  a representation  $\Pi$ of the Fell bundle $\mathcal{B}$ of a
		$C^*$-dynamical system $(A,G,\gamma)$,   $\pi(a)=\Pi_e(a,e)$	
		defines a representation of $A$, and when  $A$ is unital,  $u(s)=\Pi_s(s,1)$ gives a representation of $G$.	Conversely, each covariant representation $(\pi, u)$ of $(A,G,\gamma)$ defines a representation $\Pi_s(a)=\pi(a)u_s$, for $s\in G, a\in A$.
	\end{example}
	
	For a representation $\pi$ of a Fell bundle $\mathcal{B}$, we consider the coefficient functions
	$$
	\pi_{\xi, \eta, s}: B_{s} \rightarrow \mathbb{C};\quad \pi_{\xi, \eta, s}(b)=\left\langle\pi_{s}(b) \xi, \eta\right\rangle,
	$$
	for  $\xi, \eta \in H_{\pi}$ and $s \in G$.
	Two representations $\pi$ and $\sigma$ of $\mathcal{B}$ are
	\textit{equivalent} if there exists a
	unitary  $U: \mathcal{H}_{\pi} \rightarrow \mathcal{H}_{\sigma}$ such that
	$U \pi_{s}(b) \xi=\sigma_{s}(b) U \xi$, for  $s \in G, b \in B_{s}$, and
	$\xi \in H_{\pi}$. We  write $\pi \sim \sigma$ in this case and call $U$ an
	\textit{intertwining} operator for $\pi$ and $\sigma$.
	We  denote the collection of equivalence classes of   representations
	of $\mathcal{B}$ by $\Sigma_{c}(\mathcal{B})$.
	
	For $s$ in $G$, we denote by $j_{s}$
	the natural inclusion of $B_{s}$ in $C_{c}(\mathcal{B})$, namely the map
	$
	j_{s}: B_{s} \rightarrow C_{c}(\mathcal{B}),
	$
	defined by
	$$
	\left.j_{s}(b)\right|_{h}= \begin{cases}b, & \text { if } h=s \\ 0, & \text { otherwise. }\end{cases}
	$$
	The inclusion  $j_{e}$ at identity is a $*$-homomorphism through which we may view $B_{e}$ as a subalgebra
	of $c_{c}(\mathcal{B})$. This also makes $c_{c}(\mathcal{B})$ a right $B_{e}$-module with a $B_{e}$-valued inner-product
	$$
	\langle y, z\rangle=\sum_{s \in G}y_{s}^{*}z_{s}.
	$$
	We  denote by $\ell^{2}(\mathcal{B})$ the right Hilbert $B_{e}$-module obtained by completing
	$c_{c}(\mathcal{B})$ under the norm $\|\cdot\|_{2}$ coming  from the above inner-product.
	
	We may regard  $j_{s}$ as a map from $B_{s}$ to $\ell^{2}(\mathcal{B})$,
	which is then an isometry.
	Given $b\in B_{s}$, the map
	$$
	\lambda_{s}(b): y \in c_{c}(\mathcal{B}) \longmapsto j_{s}(b) \star y \in c_{c}(\mathcal{B}),
	$$
	is continuous in the Hilbert module norm
	and extends to a bounded operator on $\ell^{2}(\mathcal{B})$, still
	denoted by $\lambda_{s}(b)$, such that, $\left\|\lambda_{s}(b)\right\|=\|b\|$,
	and
	$$
	\lambda_{s}(b)(j_{t}(c))=j_{s t}(b c),\ \left\langle\lambda_{s}(b) \xi, \eta\right\rangle=\left\langle\xi, \lambda_{s-1}\left(b^{*}\right) \eta\right\rangle \ ( t \in G, c \in B_{t},  \xi, \eta \in \ell^{2}(\mathcal{B})).
	$$
	The representation
	$
	\lambda=\left\{\lambda_{s}\right\}_{s \in G}
	$
	is
	called the
	\textit{regular representation} of $\mathcal{B}$.
	
	\section{Fourier and Fourier-Stieltjes spaces of Fell bundles} \label{3}
	
	Given a Fell bundle $\mathcal{B}=\left\{B_{s}\right\}_{s \in G}$ over a discrete group $G$, the \textit{$\ell^1$-cross sectional algebra}  $\ell^{1}(\mathcal{B})$,
	consists of the $\ell^1$-cross sections of $\mathcal{B}$,
	with operations
	$$
	f * g(t)=\sum_{s \in G} f(s) g(s^{-1} t), \ f^{*}(t)=f(t^{-1})^{*}, \quad  (t \in G, f, g \in \ell^{1}(\mathcal{B}))
	$$
	and norm
	$
	\|f\|_{1}=\sum_{s \in G}\left\|f_{s}\right\|$.
	To each $\pi \in \Sigma_{c}(\mathcal{B})$, one could associate an algebra representation
	$\tilde{\pi}: \ell^{1}(\mathcal{B}) \rightarrow \mathcal{L}(H_{\pi})$, given by
	$\langle\tilde{\pi}(f) \xi, \eta\rangle=\sum_{s \in G}\left\langle\pi_{s}(f_{s}) \xi, \eta\right\rangle$,
	such that
	$$
	\begin{aligned}
		|\langle\tilde{\pi}(f) \xi, \eta\rangle| & \leq\|\xi\|\|\eta\| \sum_{s \in G}\left\|\pi_{s}\left(f_{s}\right)\right\|
		\leq\|\xi\|\|\eta\|\|f\|_{1}.
	\end{aligned}
	$$
	
	For $f \in \ell^{1}(\mathcal{B})$, put
	$
	\|f\|_{*}=\sup \left\{\|\tilde{\pi}(f)\|: \pi \in \Sigma_{c}(\mathcal{B})\right\}.
	$
	For the coefficient function
	$u=\pi_{\xi,\eta}$,
	$$
	\big|\sum_{s\in G}u_s(f_s)\big|
	=\big|\sum_{s \in G}\langle\pi_s(f_s)\xi,\eta\rangle\big|
	=|\langle\tilde{\pi}(f)\xi,\eta\rangle|
	\leq \|f\|_*\|\xi\|\|\eta\|. 		
	$$
	
	This allows us to define a Banach space structure on the set of coefficient functions.

	\begin{definition}
		Let 	$\mathcal{B}=\left\{B_{s}\right\}_{s \in G}$ be  a Fell bundle over discrete group $G$.
		The \textit{Fourier-Stieltjes} space $B(\mathcal{B})$	of 	$\mathcal{B}$
		is the space
		$$
		B(\mathcal{B}):=\left\{\pi_{\xi, \eta}|
		~\pi\in\Sigma_{c}(\mathcal{B}),~~\xi,\eta\in H_\pi\right\},
		$$
		with  norm
		$
		\|u\|_{B(\mathcal{B})}=	\sup\{\big|\sum_{s\in G}u_s(f_s)\big|:f\in\ell^1(\mathcal{B}), \|f\|_*\leq 1\}.
		$

	\end{definition}
	
	For a representation $\pi$ of a Fell bundle $\mathcal{B}$ and
	a unitary representation $u$ of $G$, we may define a representation
	$$u\otimes\pi:\mathcal{B}\rightarrow\mathcal{L}(H_u\otimes H_\pi); \ (u\otimes\pi)_s(b)=u_s\otimes\pi_s(b), \ \ (s\in G, b\in B_s).$$ Similarly, we have a representation
	$\pi\otimes u:\mathcal{B}\rightarrow\mathcal{L}(H_\pi\otimes H_u)$.

	Let $\pi:\mathcal{B}\rightarrow\mathcal{L}(H_\pi)$ a  representation of $\mathcal{B}$ and
	$u$ be a unitary representation of $G$.
	For $\xi,\eta\in H_\pi$ and $\alpha,\beta\in H_u$, and functions $u_{\alpha,\beta}\in B(G)$ and
	$\pi_{\xi,\eta}\in B(\mathcal{B})$, put
	$$
	(u_{\alpha,\beta}\cdot\pi_{\xi,\eta})_s(b)=\langle(u_s\otimes\pi_s(b))(\alpha\otimes\xi),\beta\otimes\eta\rangle,
	\quad (s\in G, b\in B_s).
	$$
	Then $B(\mathcal{B})$ is a left $B(G)$-module.
	Similarly, using $\pi\otimes u$, $B(\mathcal{B})$ has a right $B(G)$-action.

	A functional $\phi$ on $C^*(\mathcal{B})$ is  positive if $\phi(u^**u)\geq 0$ for each
	$u\in C^*(\mathcal{B})$. For a representation $\pi$  in $ \Sigma_{c}(\mathcal{B})$ and
	$\xi\in H_\pi$, the functional $\tilde{\pi}_{\xi,\xi}:C^*(\mathcal{B})\rightarrow\mathbb{C}$ defined by
	$\tilde{\pi}_{\xi,\xi}(g)=\langle \tilde{\pi}(g)\xi,\xi\rangle$ is a positive linear functional on
	$C^*(\mathcal{B})$ and for each $f\in\ell^{1}(\mathcal{B})$, we have
	$\tilde{\pi}_{\xi,\xi}(f)=\sum_{r \in G}\langle\pi_{r}(f_r)\xi, \xi\rangle$.
	\begin{definition}
		An element $\varphi\in B(\mathcal{B})$	is called \textit{positive definite}  if
		there exists a representation $\pi\in\sum_{c}(\mathcal{B})$ and a vector $\xi\in H_\pi$ such that $\varphi=\pi_{\xi,\xi}$.	
	\end{definition}
	In \cite{FeDo88}, Fell and Doran defined the functionals of  positive type on $\mathcal{B}$ as functionals $P$  satisfying the inequality
	$\sum_{i,j} P(b_j^*b_i)\geq 0$, for any finite sequence $b_1,...,b_n\in\mathcal{B}$.
	By  \cite[20.6]{FeDo88}, our positive definite functions correspond to functionals of positive type in the sense  of Fell and Doran.
	We let $P(\mathcal{B})$ be the collection of all positive definite functions on $\mathcal{B}$.

	\begin{lemma}\label{eta2}
		Let $\mathcal{B}$ be a Fell bundle over a discrete group $G$ and  $\psi=\pi_{\eta,\eta}\in P(\mathcal{B})$, for
		some $\pi\in\Sigma_{c}(\mathcal{B})$ and $\eta\in H_\pi$. Then
		$\|\psi\|=\|\eta\|^2$.  	
	\end{lemma}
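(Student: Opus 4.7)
The plan is to prove the two inequalities separately. The upper bound $\|\psi\|\le\|\eta\|^2$ is essentially a routine unwinding of the definition; the lower bound requires producing test elements of $\ell^1(\mathcal B)$ on which $\psi$ nearly attains the value $\|\eta\|^2$.

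For the upper bound, I would simply rewrite $\sum_{s}\psi_s(f_s)=\langle\tilde\pi(f)\eta,\eta\rangle$ for any $f\in\ell^1(\mathcal B)$, and apply Cauchy--Schwarz followed by the estimate $\|\tilde\pi(f)\|\le\|f\|_*$ which is built into the definition of $\|\cdot\|_*$. This gives
\[
\Bigl|\sum_{s\in G}\psi_s(f_s)\Bigr|\;\le\;\|\tilde\pi(f)\|\,\|\eta\|^2\;\le\;\|f\|_*\,\|\eta\|^2,
\]
and taking supremum over $\|f\|_*\le 1$ yields $\|\psi\|_{B(\mathcal B)}\le\|\eta\|^2$.

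For the lower bound, I would exploit the fact that the associated functional $\tilde\pi_{\eta,\eta}$ on $C^*(\mathcal B)$ is positive. The key is to test $\psi$ against elements of $\ell^1(\mathcal B)$ supported at the identity. Let $(e_\alpha)$ be a bounded approximate identity for the $C^*$-algebra $B_e$. Setting $f_\alpha:=j_e(e_\alpha)\in c_c(\mathcal B)\subseteq\ell^1(\mathcal B)$, one has $\tilde\pi(f_\alpha)=\pi_e(e_\alpha)$ for every representation $\pi$ of $\mathcal B$, hence
\[
\|f_\alpha\|_*\;=\;\sup_\pi\|\pi_e(e_\alpha)\|\;\le\;\|e_\alpha\|\;\le\;1,
\]
and simultaneously $\sum_{s}\psi_s((f_\alpha)_s)=\langle\pi_e(e_\alpha)\eta,\eta\rangle$. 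It remains to observe that this quantity converges to $\|\eta\|^2$; plugging this into the definition of $\|\psi\|_{B(\mathcal B)}$ completes the argument.

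The one genuine obstacle is that last convergence: $\langle\pi_e(e_\alpha)\eta,\eta\rangle\to\|\eta\|^2$ requires $\eta$ to lie in the essential subspace of the $B_e$-representation $\pi_e$. I would handle this by first decomposing $H_\pi=\overline{\pi_e(B_e)H_\pi}\oplus K$, where $K=\{\xi:\pi_e(B_e)\xi=0\}$; since $\|\pi_s(b)\xi\|^2=\langle\pi_e(b^*b)\xi,\xi\rangle$, the subspace $K$ is annihilated by every $\pi_s(b)$, so writing $\eta=\eta_0+\eta_1$ with $\eta_0$ essential and $\eta_1\in K$, one gets $\pi_{\eta,\eta}=\pi_{\eta_0,\eta_0}$ as elements of $B(\mathcal B)$. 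Thus without loss of generality $\eta$ is essential, where standard $C^*$-algebra theory gives $\pi_e(e_\alpha)\eta\to\eta$ and hence the desired $\|\psi\|\ge\|\eta_0\|^2=\|\eta\|^2$ (the latter equality being implicit in the lemma, i.e. one should read the statement modulo passing to the essential part, which changes neither $\psi$ nor its norm).
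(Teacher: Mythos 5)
Your proof is correct, but it takes a genuinely different route from the paper's. The paper gets the lower bound by passing to the bidual: it extends $\psi$ to a functional $\tilde\psi$ on $C^*(\mathcal{B})^{**}$ of the same norm, picks a net $g_\alpha\to 1$ weak$^*$ in $C^*(\mathcal{B})^{**}$, and computes $\tilde\psi(1)=\lim_\alpha\langle\tilde\pi(g_\alpha)\eta,\eta\rangle=\|\eta\|^2$. You instead stay entirely inside $\ell^1(\mathcal{B})$ and test $\psi$ against $j_e(e_\alpha)$ for a bounded approximate identity $(e_\alpha)$ of the $C^*$-algebra $B_e$. Your version is more elementary and, in one respect, more careful: the paper's step $\tilde\psi(1)=\lim_\alpha\psi(g_\alpha)$ is only valid if $\tilde\psi$ is the canonical normal extension rather than an arbitrary Hahn--Banach extension, a point your argument sidesteps entirely. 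Both proofs hinge on the same hidden hypothesis, namely that $\eta$ lies in the essential subspace of $\pi_e$ (equivalently that $\tilde\pi$ is nondegenerate on $\eta$): the paper assumes this silently when it asserts $\tilde\pi(g_\alpha)\to\mathrm{id}$, whereas you isolate it, show via the decomposition $H_\pi=\overline{\pi_e(B_e)H_\pi}\oplus K$ that passing to the essential part changes neither $\psi$ nor $\|\psi\|$, and correctly observe that the lemma as literally stated fails for $\eta$ with a nonzero component in $K$ (e.g.\ $\psi=0$ with $\eta\neq 0$). That caveat is a genuine improvement in precision over the paper's own argument, not a defect of yours.
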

	\begin{proof}
		Let us denote the unit element of $C^*(\mathcal{B})^{**}$ by $1$ and choose a net $(g_\alpha)_\alpha$ in $ C^*(\mathcal{B})$ with
		$g_\alpha\rightarrow 1$ in the weak$^*$ topology of $C^*(\mathcal{B})^{**}$. Then,
		$\tilde{\pi}(g_\alpha)\rightarrow\mathrm{id}$ in the weak$^*$ toplogy of $\mathcal{L}(H_\pi)$.
		Use Hahn Banach to extend
		$\psi:C^*(\mathcal{B})\rightarrow\mathbb{C}$ to $\tilde{\psi}:C^*(\mathcal{B})^{**}\rightarrow\mathbb{C}$
		such that $\|\tilde{\psi}\|=\|\psi\|$.	Then,
		$$
		\tilde{\psi}(1)=\lim_\alpha\psi(g_\alpha)=\lim_\alpha\langle\tilde{\pi}(g_\alpha)\eta,\eta\rangle=
		\langle\eta,\eta\rangle=\|\eta\|^2.
		$$
		Therefore, $\|\tilde{\psi}\|\geq \|\eta\|^2\geq\|\psi\|$ and so $\|\psi\|=\|\eta\|^2$.
		
	\end{proof}

	\begin{proposition}\label{positivity}
		Let $\mathcal{B}$ be a saturated Fell bundle over a discrete group $G$. Then,
		
		\vspace{.2cm}
		$(i)$ if $\varphi$	is
		a positive linear functional  on $C^*(\mathcal{B})$, then $\varphi\in P(\mathcal{B})$.
		
		$(ii)$ every element of $B(\mathcal{B})$ is a finite linear combination of elements of $P(\mathcal{B})$.
		
		$(iii)$ there is an isometric isomorphism from $	B(\mathcal{B})$ onto $C^{*}(\mathcal{B})^*$.	
		
		$(iv)$ for $\varphi\in B(\mathcal{B})$,
		there exist a representation $\pi\in\Sigma_{c}(\mathcal{B})$ and  vectors $\xi,\eta\in H_\pi$	such that
		$
		\varphi=\pi_{\xi,\eta}$ and $\|\varphi\|=\|\xi\|\|\eta\|.
		$
		
		$(v)$ for
		$\varphi\in B(\mathcal{B})$,
		$
		\|\varphi\|=\inf\{\|\xi\|\|\eta\|~:~\varphi=\pi_{\xi,\eta}\}.
		$		
	\end{proposition}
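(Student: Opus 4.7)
The plan is to reduce everything to the C*-algebra $C^{*}(\mathcal B)$ via the one-to-one correspondence between Fell bundle representations $\pi=\{\pi_s\}$ and non-degenerate $*$-representations $\tilde\pi$ of $C^{*}(\mathcal B)$, which is given in one direction by integration $\tilde\pi(f)=\sum_s\pi_s(f_s)$ on $\ell^{1}(\mathcal B)$, and in the other direction by restriction along the fiber inclusions $j_s\colon B_s\hookrightarrow C^{*}(\mathcal B)$, using the identities $j_s(b)\star j_t(c)=j_{st}(bc)$ and $j_s(b)^{*}=j_{s^{-1}}(b^{*})$. Saturation will be invoked to guarantee that this correspondence interacts well with the positive cones and that a GNS representation obtained from $C^{*}(\mathcal B)$ really exhausts coefficient information on every fiber.

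For \textbf{(i)}, I apply GNS to $\varphi$ to obtain a cyclic triple $(\rho,H_\rho,\xi)$ with $\varphi(a)=\langle\rho(a)\xi,\xi\rangle$; setting $\pi_s:=\rho\circ j_s$ produces a representation of $\mathcal B$ and then $\varphi\circ j_s=\pi_{\xi,\xi,s}$, so $\varphi=\pi_{\xi,\xi}\in P(\mathcal B)$. For \textbf{(ii)} I use the standard polarization identity
\[
\langle\pi_s(b)\xi,\eta\rangle=\tfrac14\sum_{k=0}^{3}i^{k}\langle\pi_s(b)(\xi+i^{k}\eta),\xi+i^{k}\eta\rangle,
\]
expressing any $\pi_{\xi,\eta}$ as a combination of four elements of $P(\mathcal B)$. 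For \textbf{(iii)}, define $\Phi\colon B(\mathcal B)\to C^{*}(\mathcal B)^{*}$ by sending $u=\pi_{\xi,\eta}$ to the continuous functional $\tilde u$ on $C^{*}(\mathcal B)$ determined by $\tilde u(a)=\langle\tilde\pi(a)\xi,\eta\rangle$ (well-defined since $\tilde\pi$ extends to $C^{*}(\mathcal B)$). The very definition of the norm on $B(\mathcal B)$ combined with the density of $\ell^{1}(\mathcal B)$ in $C^{*}(\mathcal B)$ gives $\|\Phi(u)\|=\|u\|_{B(\mathcal B)}$, hence $\Phi$ is isometric. Surjectivity follows from the Jordan decomposition of a bounded functional on a C*-algebra as a linear combination of four positive functionals, each of which lies in $P(\mathcal B)$ by (i), so via (ii) it is realized by an element of $B(\mathcal B)$.

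For \textbf{(iv)}, I invoke the polar decomposition of bounded functionals on a C*-algebra (Sakai/Takesaki): given $\varphi\in B(\mathcal B)$, the functional $\tilde\varphi=\Phi(\varphi)\in C^{*}(\mathcal B)^{*}$ admits a representation $\tilde\varphi(a)=\langle\rho(a)\xi,\eta\rangle$ with $\|\xi\|\|\eta\|=\|\tilde\varphi\|=\|\varphi\|$. Setting $\pi_s:=\rho\circ j_s$ realizes $\varphi=\pi_{\xi,\eta}$ with the asserted equality. Finally \textbf{(v)} follows by combining the trivial inequality
\[
\Bigl|\sum_{s}\pi_{\xi,\eta,s}(f_s)\Bigr|=|\langle\tilde\pi(f)\xi,\eta\rangle|\le\|\tilde\pi(f)\|\|\xi\|\|\eta\|\le\|f\|_{*}\|\xi\|\|\eta\|,
\]
valid for every realization $\varphi=\pi_{\xi,\eta}$, with the attainment provided by (iv).

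The principal obstacle is \textbf{(iii)} and \textbf{(iv)}: one must verify carefully that the bundle--C*-algebra correspondence is isometric, that the integrated map sends the $\ell^{1}$-norm definition of $\|\cdot\|_{B(\mathcal B)}$ exactly to the dual norm on $C^{*}(\mathcal B)^{*}$, and that the GNS/polar-decomposition data coming from $C^{*}(\mathcal B)$ can be transported back to the Fell bundle intact. This is the point at which saturation is genuinely used, since it ensures that the integrated representations and their cyclic cones detect all positive linear data on each $B_s$, so that nothing of the coefficient function is lost on passage to $C^{*}(\mathcal B)$ and back.
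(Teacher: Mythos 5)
Your proposal is correct and follows the same overall architecture as the paper (identify $B(\mathcal{B})$ with $C^{*}(\mathcal{B})^{*}$ via integration along the fibers, then pull GNS and polar-decomposition data back through $j_s$), but several of your individual steps genuinely diverge, mostly in ways that streamline the argument. In $(i)$ the paper does not apply GNS to $C^{*}(\mathcal{B})$ as you do; it builds the GNS triple by hand from $\ell^{1}(\mathcal{B})$, using the sesquilinear form $\langle f,g\rangle=\sum_{r}\varphi_r((g^{*}*f)_r)$ and obtaining the cyclic vector $\eta$ as a weak limit of an approximate identity of $\ell^{1}(\mathcal{B})$ --- and this is precisely where saturation enters, via the Fell--Doran result that saturated bundles have an approximate identity in $\ell^{1}(\mathcal{B})$. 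Your route sidesteps this because $C^{*}(\mathcal{B})$, like any $C^{*}$-algebra, has a bounded approximate identity, so your version of $(i)$ does not actually use saturation at all; your closing remark that saturation is ``genuinely used'' to make the cyclic cones detect all positive data is therefore misplaced as a description of your own argument (it describes the paper's, not yours). In $(ii)$ you use polarization,
\[
\langle\pi_s(b)\xi,\eta\rangle=\tfrac14\sum_{k=0}^{3}i^{k}\langle\pi_s(b)(\xi+i^{k}\eta),\xi+i^{k}\eta\rangle,
\]
which exhibits the four positive-definite summands directly from the given representation and needs neither $(i)$ nor saturation; the paper instead passes to $C^{*}(\mathcal{B})^{*}$, takes the Jordan decomposition, and feeds the four positive pieces back through $(i)$. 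Both are fine, and yours is the more elementary. In $(iv)$ you cite the realization $\tilde\varphi(a)=\langle\rho(a)\xi,\eta\rangle$ with $\|\xi\|\|\eta\|=\|\tilde\varphi\|$ as a black box, whereas the paper re-derives it: it writes $\varphi=R_h\psi$ with $\psi=\pi_{\eta,\eta}$ positive and $h\in C^{*}(\mathcal{B})^{**}$ a contraction, sets $\xi=\tilde\pi^{**}(h)\eta$, and checks $\|\xi\|\le\|\eta\|$ using its Lemma 3.6 ($\|\pi_{\eta,\eta}\|=\|\eta\|^{2}$). Your citation is legitimate, but you should transport the conclusion back to the bundle with the same care you flag for $(iii)$: the only point to verify is that $\pi_s:=\rho\circ j_s$ re-integrates to $\rho$ on $\ell^{1}(\mathcal{B})$, which holds since $f=\sum_s j_s(f_s)$ converges in $C^{*}(\mathcal{B})$. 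Parts $(iii)$ and $(v)$ are essentially identical to the paper's. No gaps; the net effect of your choices is a proof in which the saturation hypothesis is used only implicitly, if at all, which is worth making explicit rather than gesturing at.
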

	\begin{proof}
		$(i)$ Clearly $\varphi|_{\ell^{1}(\mathcal{B})}\in \ell^{1}(\mathcal{B})^*$
		is  positive  and   $\ell^{1}(\mathcal{B})^*\cong\ell^{\infty}(\mathcal{B}^*)$,
		where
		$$
		\ell^{\infty}(\mathcal{B}^*)=\{\psi:G\rightarrow\mathcal{B}^*:\  \psi(r)\in B_r^*, \|\psi\|_\infty:=\sup_{r\in G}\|\psi_r\|<\infty\}.
		$$
		For  $f,g\in \ell^{1}(\mathcal{B})$,  put
		$
		\langle f, g\rangle=\sum_{r\in G}\varphi_r\left((g^**f)_r\right)	
		$
		and observe that
		$
		|\langle f, g\rangle|\leq \|\varphi\|_\infty\|f\|_1\|g\|_1.
		$
		Let $\mathcal{N}=\{f\in\ell^{1}(\mathcal{B}): \langle f, f\rangle=0\}$.
		By  Schwartz inequality,  $\mathcal{N}$ is a left ideal in $\ell^{1}(\mathcal{B})$ and we get an inner product on the quotient space
		$\ell^{1}(\mathcal{B})/\mathcal{N}$. Denote the Hilbert space completion of
		$\ell^{1}(\mathcal{B})/\mathcal{N}$ by $H_\varphi$, and the image of
		$f\in \ell^{1}(\mathcal{B})$ in $ H_\varphi$ under the quotient map by $\tilde{f}$.
		
		Let $\psi_\alpha$ be an approximate identity in $ \ell^{1}(\mathcal{B})$, which exists as $\mathcal B$ is saturated \cite[VIII.2.13]{FeDo88}. Then
		for  $f\in \ell^{1}(\mathcal{B})$,
		$$
		\langle\tilde{f},\psi_\alpha\rangle=\sum_{s\in G}\varphi_s\left((\psi_\alpha^**f)_s \right)
		\rightarrow\sum_{s\in G}\varphi_s(f_s)=\varphi(f).
		$$
		Note that $\lim_\alpha\langle v, \psi_\alpha\rangle$ exists for each
		$v\in H_\varphi$, and hence  $\tilde{\psi_\alpha}$ converges weakly in $H_\varphi$
		to an element $\eta$ with
		$\langle \tilde{f},\eta\rangle=\varphi(f)$, for  $f\in \ell^{1}(\mathcal{B})$. Next, we define
		an algebra representation $$\rho:\ell^{1}(\mathcal{B})\rightarrow \mathcal{L}(H_\varphi); \ \rho(f)\tilde{g}=(f*g)^{\tilde{}},$$ for
		$f,g\in \ell^{1}(\mathcal{B})$, and put
		$\pi_s(b):=\rho(J_s(b))$, for $s\in G, b\in B_s.$
		Then, $\{\pi_s\}\in\Sigma_{c}(\mathcal{B})$ and
		$$
		\langle \tilde{g},\rho(f)\eta\rangle= \langle \rho(f^*)\tilde{g},\eta\rangle	\\
		=  \langle (f^**g)\tilde{},\eta\rangle\\
		= \langle \tilde{g}, \tilde{f}\rangle.	
		$$
		It follows that, $\rho(f)\eta=\tilde{f}$, for  $f\in \ell^{1}(\mathcal{B})$.
		Therefore,
		\begin{eqnarray*}
			\pi_{\eta,\eta,r}(b)&=&\langle\rho(J_r(b))\eta,\eta \rangle
			=\langle \widetilde{J_r(b)},\eta\rangle\\
			&=& \varphi(J_r(b))
			=\sum_{s \in G}\varphi_s(J_r(b)(s))=\varphi_r(b),	
		\end{eqnarray*}
		for $r\in G, b\in B_r.$ Thus, $\varphi=\pi_{\eta,\eta}\in P(\mathcal{B})$.	
		
		$(ii)$ To each  $\varphi=\pi_{\xi,\eta}\in B(\mathcal{B})$ one could associate $\tilde\varphi\in C^*(\mathcal{B})^*$, defined by $\varphi(u)=\langle\tilde{\pi}(u)\xi,\eta\rangle$, for
		$u\in C^*(\mathcal{B})$. Using Jordan decomposition
		$
		\varphi=(\varphi_1-\varphi_2)+i(\varphi_3-\varphi_4)
		$
		into positive linear functionals, the assertion now follows by part $(i)$.	
		
		$(iii)$
		Define $T:B(\mathcal{B})\rightarrow C^{*}(\mathcal{B})^*$ by $\varphi\mapsto T_\varphi$, where
		$T_\varphi:C^{*}(\mathcal{B})\rightarrow\mathbb{C}$	is defined
		by $T_\varphi(f)=\sum_{s\in G}\varphi_s(f_s)$, for each $f\in \ell^1(\mathcal{B})$.
		By part $(ii)$,  $\|T_\varphi\|=\|\varphi\|_{B(\mathcal{B})}$, as required.
		
		Next, for $u\in C^{*}(\mathcal{B})^*$ with Jordan decomposition  	$$
		u=(u_1-u_2)+i(u_3-u_4),
		$$
		for each  positive  functional $u_i$, by part $(i)$, there is a
		representation $\pi^i\in\Sigma_c(\mathcal{B})$ on a Hilbert space $H_{i}$
		and $\xi_i\in H_{i}$ such that $u_i=\pi^i_{\xi_i,\xi_i}$.
		Define
		$
		\rho:=\{\rho_s\}_{s\in G}
		$
		on the direct sum $H_0$ of the Hilbert spaces $H_i$
		by
		$$
		\langle\rho_s(b)(\xi_i),
		(\eta_i)\rangle=\sum_{i=1}^{4}\langle\pi^i_s(b)\xi_i,\eta_i\rangle.
		$$
		Then, for
		$\xi_0=(\xi_1,-\xi_2,i\xi_3,-i\xi_4)$ and $\eta_0=(\xi_1,\xi_2,\xi_3,\xi_4)$ in $H_0$,
		$$
		\langle\rho_s(b)\xi_0,\eta_0 \rangle=\langle\pi^1_s(b)\xi_1,\xi_1\rangle-
		\langle\pi^2_s(b)\xi_2,\xi_2\rangle+i\langle\pi^3_s(b)\xi_3,\xi_3\rangle-i\langle\pi^4_s(b)\xi_4,\xi_4\rangle
		=u(b),	
		$$
		that is, $u=\rho_{\xi_0,\eta_0}$ and  $T_\rho=u$.
		
		$(iv)$
		Let $\varphi=R_h\psi$ be the polar decomposition of $\varphi$, where $\psi\in B(\mathcal{B})$ is positive,
		$\|\psi\|=\|\varphi\|$,
		$h\in B(\mathcal{B})^*=C^*(\mathcal{B})^{**}$ with $\|h\|\leq 1$,
		and $R_h\psi(g)=\psi(gh)$,	for  $g\in B(\mathcal{B})^*$. By Lemma \ref{positivity},
		there exists a representation $\pi\in\Sigma_{c}(\mathcal{B})$ and a vector $\eta\in H_\pi$ such that
		$\psi=\pi_{\eta,\eta}$.
		Let ${\omega}:C^*(\mathcal{B})\rightarrow C^*(\mathcal{B})^{**}$ be the
		universal representation of $C^*(\mathcal{B})$. Then, $\tilde{\pi}(g)=\tilde{\pi}^{**}(\omega(g))$,
		for  $g\in C^*(\mathcal{B})$, where $\tilde{\pi}^{**}$ is the second transpose of
		$\tilde{\pi}$ from $C^*(\mathcal{B})^{**}$ into $L(H_\pi)$.  	
		Put $\xi:=\tilde{\pi}^{**}(h)\eta$. By Lemma \ref{eta2},
		$$
		\|\tilde{\pi}^{**}(h)\eta\|^2=|\langle \tilde{\pi}^{**}(h^**h)\eta,\eta \rangle|=
		|\langle h^**h,\psi\rangle|\leq \|\psi\|=\|\varphi\|=\|\eta\|^2.
		$$
		Therefore, $\|\xi\|\leq \|\eta\|$. Next, for every $g\in C^*(\mathcal{B})$,
		\begin{eqnarray*}
			\langle\varphi, g\rangle&=&\langle R_h\psi,g\rangle
			=\langle	\psi,\omega(g)*h\rangle\\
			&=&\langle\tilde{\pi}^{**}(\omega(g))\tilde{\pi}^{**}(h)\eta,\eta \rangle	
			=\langle \tilde{\pi}(g)\xi,\eta \rangle,
		\end{eqnarray*}	
		that is, $\varphi=\pi_{\xi,\eta}$, thus,
		$$
		\|\varphi\|\leq\|\xi\|\|\eta\|\leq\|\eta\|^2=\|\psi\|=\|\varphi\|.
		$$	
		
		$(v)$ This follows immediately from part $(iv)$. 	  	
	\end{proof}
	
	\begin{example} \label{fs} Certain basic examples of the Fourier-Stieltjes space are in order.
		
		\vspace{.2cm}
		$(i)$ Let $G$ be a  finite abelian group and  $(A,G,\tau)$	be a $C^*$-dynamical system. For each  character $\chi\in\widehat{G}$, put
		$B_\chi=\{a\in A:\tau_s(a)=\chi(s)a~ (s\in G)\}$. This forms a Fell
		bundle $\mathcal{B}=\{B_\chi\}_{\chi\in\widehat{G}}$  over $\widehat{G}$ and $A$ can be identified with $C^*(\mathcal{B})$; for more details see \cite[P. 1044]{FeDo88}.
		This is known to be saturated when $A$ is of compact type (i.e., a direct sum of elementary $C^*$-algebras; cf., \cite[VI.23.3]{FeDo88}) and $G$ acts on $A$ by inner automorphisms \cite[X.Exercise 23]{FeDo88}. In this case,
		the Fourier-Stieltjes space $B(\mathcal{B})$ is isomorphic to $A^*$. As far as we know, a necessary and sufficient condition for the bundle in this example to be saturated is not known yet \cite[page 1045]{FeDo88}.
		
		\vspace{.2cm}
		$(ii)$
		Let  $\mathcal{B}$ be the Fell bundle of a $C^*$-dynamical system  $(A,G,\gamma)$.
		To every functional $\varphi:A\rtimes_\gamma G\rightarrow\mathbb{C}$
		associate a function $\Phi:G\rightarrow A^*$ by $\Phi(t)(a)=\varphi(au_t)$, for  $t\in G$ and $a\in A$; where
		$u:G\rightarrow M(A\rtimes_\gamma G)$ pairs with  a representation
		$\pi:A\rightarrow M(A\rtimes_\gamma G)$ in the universal covariant representation  $(\pi,u)$  of $(A,G,\gamma)$.
		The set of functions $\Phi:G\rightarrow A^*$ is denoted by $B(A\rtimes_\gamma G)$ and there is a
		bijection from $(A\rtimes_\gamma G)^*$ onto $B(A\rtimes_\gamma G)$, as observed first by Gert Pedersen in \cite{pedersen}.
		Moreover, if $\mathcal{B}=\{B_s\}_{s\in G}$ is the Fell bundle of $(A,G,\gamma)$, then it is known that
		$C^*(\mathcal{B})\simeq A\rtimes_\gamma G$. Therefore, $B(\mathcal{B})\simeq (A\rtimes_\gamma G)^*$,
		since  $\mathcal B$ is known to be automatically saturated in this case \cite[VIII.4.3]{FeDo88}.
		
		For this example, the Fourier-Stieltjes space
		$B(\mathcal{B})$ is isomorphic to the space $B(A\rtimes_\gamma G)$, introduced by Pedersen \cite[7.6.6]{pedersen}. If $\varphi$ is a
		positive functional on $A\rtimes_\gamma G$,  then the associated function $\Phi$ is a  positive definite function. The above isomorphism maps $P(\mathcal{B})$ on the
		set $B_+(A\rtimes_\gamma G)$ of  positive definite functions (cf., \cite[7.6.8]{pedersen}).
		
		$(iii)$ For the Fell bundle $\mathcal A$ associated as in Example \ref{bundle}($iii$) to a coaction  $\delta: A \rightarrow$ $A \otimes C^{*}(G),$   we have $A\cong C^*(\mathcal A)$. Since this bundle is clearly saturated, we get $B(\mathcal A)\simeq A^*$.

		$(iv)$ If $G$ is abelian, then there exists a natural action
		$\widehat{\gamma}$ of the dual group $\widehat{G}$ of $G$ on $B:=A \rtimes_\gamma  G$ given for $f \in C_{c}(G, A)$ by
		$$
		\widehat{\gamma}_{\chi}(f)(s)=\chi(s) f(s).
		$$
		The  Takesaki-Takai duality theorem asserts that
		$(A \rtimes_{\gamma} G) \rtimes_{\hat{\gamma}} \widehat{G}\simeq A \otimes \mathcal{K}\left(\ell^{2}(G)\right)$, canonically, that is, the double crossed product is stably isomorphic to $A$.
		One can define the Fell bundle of $(A \rtimes_\gamma  G,\widehat{G},\hat{\gamma})$ by
		$\widehat{\mathcal{B}}=\{\widehat{B}_\chi\}_{\chi\in\widehat{G}}$, which gives
		$$C^*(\widehat{\mathcal{B}})\simeq (A \rtimes_{\gamma} G) \rtimes_{\hat{\gamma}} \widehat{G}\simeq
		C^*(\mathcal{B})\rtimes_{\hat{\gamma}}\widehat{G}. $$
		Therefore,    $B(\widehat{\mathcal{B}})\simeq ( C^*(\mathcal{B})\rtimes_{\hat{\gamma}}\widehat{G})^*$
		if   $\widehat{\mathcal{B}}$ is saturated (this in particular holds if $A \rtimes_\gamma  G$ is of compact type and $\hat\gamma$ is inner; see part ($i$) above).

	\end{example}

	Let $\lambda:\mathcal{B}\rightarrow\mathcal{L}\left(\ell^{2}(\mathcal{B})\right)$ be the  regular
	representation of a Fell bundle  	$\mathcal{B}=\{B_s\}_{s\in G}$ on the
	Hilbert  $B_e$-module
	$\ell^2(\mathcal{B})$. We may then consider the map
	$$
	\lambda_{\xi, \eta,s}:B_s\rightarrow B_e;\quad  \lambda_{\xi, \eta,s}(b)=\langle\lambda_s(b)\xi,\eta\rangle,
	$$
	for  $\xi,\eta\in\ell^2(\mathcal{B})$, $s\in G$ and $b\in B_s$.

	\begin{definition}	
		The \textit{Fourier} space $A(\mathcal{B})$	of a Fell bundle	$\mathcal{B}$ is defined by
		$$A(\mathcal{B})= \left\{\lambda_{\xi,\eta}:\xi, \eta \in \ell^{2}(\mathcal{B}) \right\}.
		$$
		Put
		$$
		\|\lambda_{\xi,\eta}\|=\sup\{\|\langle\lambda(f)\xi,\eta\rangle\|: f\in\ell^1(\mathcal{B}),~\|f\|_*\leq 1\},
		$$
		where, $\langle\lambda(f)\xi,\eta\rangle:=\sum_{r\in G}\langle\lambda_r(f_r)\xi,\eta\rangle$.
	\end{definition}
	First let us observe that $A(\mathcal{B})$
	is a two-sided $A(G)$-module:
	given $\lambda_{\xi, \eta}\in A(\mathcal{B})$	and $\lambda^G_{\alpha,\beta}\in A(G)$, for every
	$s\in G$ and $b\in B_s$,
	$$
	(\lambda_{\xi,\eta}\cdot\lambda^G_{\alpha,\beta})_s(b)=(\lambda^G_{\alpha,\beta}\cdot\lambda_{\xi,\eta})_s(b)=
	\langle\lambda_s(b)\xi,\eta \rangle\langle\lambda_s^G\alpha,\beta\rangle\in B_e.
	$$

	\begin{example}	\label{f} Fourier space could be calculated for typical cases of Fell bundles.
		
		\vspace{.2cm}
		$(i)$ Let $\mathcal{B}$ be the Fell bundle of a $C^*$-dynamical system   $(A,G,\gamma)$.
		By Example \ref{bundle}$(ii)$, we have the identification
		$$\ell^2(\mathcal{B})=\{(a_s)\in\Pi_{s\in G} B_s : \|\sum_{s \in G}\gamma_{s^{-1}}(a_s^*a_s)\|<\infty\}.$$
		In this case, $\lambda_{\xi, \eta,s}(b)=(e,\sum_{t \in G}a_{s^{-1}t}^*b^*b_t)\in B_e$. Back to Example \ref{fs}($ii$), the isomorphism sending $B(\mathcal B)$ onto $B(A\rtimes_\gamma G)$ sends  $A(\mathcal B)$ onto $A(A\rtimes_\gamma G)$, defined by Pedersen \cite[page 262]{pedersen}. An element in $B_+(A\rtimes_\gamma G)$ belongs to $A_+(A\rtimes_\gamma G)$ \cite[7.7.6]{pedersen} and $A(A\rtimes_\gamma G)$ is nothing but the span closure of elements of ``finite support'' in $B_+(A\rtimes_\gamma G)$ (where finite support here simply means that the corresponding map on $G$ is of finite support). It now follows  that $A(\mathcal{B})$ coincides with the space $A(A\rtimes_\gamma G)$ considered by Pedersen, when $\mathcal{B}$ is the Fell bundle of system   $(A,G,\gamma)$. In particular, it follows from (the proof of) \cite[7.7.7]{pedersen} that for Fell bundles coming from actions, $A(\mathcal{B})=B(\mathcal{B})$ when $G$ is discrete amenable (we make these vague ideas into rigorous proofs in the next section).

		$(ii)$ Let $\mathcal{B}=\{A_s\}_{s\in G}$ be the Fell bundle whicof a coaction
		$\delta: A \rightarrow$ $A \otimes C^{*}(G)$ as in Example \ref{bundle}$(iii)$. Then
		$$\ell^2(\mathcal{B})=\{(a_s)\in\Pi_{s\in G} A_s : \|\sum_{s \in G}a_s^*a_s\|<\infty\}.$$
		In this case, $\lambda_{\xi, \eta,s}(b)=\sum_{t \in G}a_{s^{-1}t}^*b^*b_t$,
		for  $\xi=(a_t)_{t\in G},\eta=(b_t)_{t\in G}\in\ell^2(\mathcal{B})$ and $b\in A_s$. Back to Example \ref{fs}($iii$), we say that a positive functional $\phi\in A^*_+$ is of ``compact support'' if $\phi(\sum_{s\in G}a_s^*a_s)<\infty$. One could now identify the Fourier space $A(\mathcal A)$ of the Fell bundle $\mathcal A$ of a coaction with the span closure of positive functionals.
		
	\end{example}

	Let  $c_0(G,\mathcal{L}(\mathcal{B},B_e))$ be the Banach space of all maps $\varphi$
	from $G$ into disjoint union of spaces $\mathcal{L}({B_s},B_e)$, vanishing at infinity with
	$\varphi(s)\in\mathcal{L}(B_s,B_e)$, for every $s\in G$, with norm  $\|\varphi\|_{\infty}=\sup_{s\in G}\|\varphi(s)\|$.
	We denote  by $c_c(G,\mathcal{L}(\mathcal{B},B_e))$ the
	set of all $\varphi\in c_0(G,\mathcal{L}(\mathcal{B},B_e))$ with finite support.
	
	It is known that for a (locally compact) group $G$, the Fourier algebra $A(G)$ is a subspace of $C_0(G)$. This also holds for Fell bundles in  the
	following sense.
	
	\begin{proposition}
		$A(\mathcal{B})$ is a subspace of $c_0(G,\mathcal{L}(\mathcal{B},B_e))$.
	\end{proposition}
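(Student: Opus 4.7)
The plan is to fix $\xi,\eta\in\ell^2(\mathcal{B})$ and verify two properties of the coefficient function $s\mapsto \lambda_{\xi,\eta,s}$: (a) each $\lambda_{\xi,\eta,s}$ is a bounded linear map $B_s\to B_e$, and (b) $s\mapsto \|\lambda_{\xi,\eta,s}\|$ vanishes at infinity on $G$. Both will follow by handling finitely supported sections first and then passing to general $\xi,\eta$ by density.

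For (a), linearity in $b$ is clear from linearity of $\lambda_s$, and the Hilbert-module Cauchy--Schwarz inequality together with the bound $\|\lambda_s(b)\|=\|b\|$ noted in Section~\ref{2} gives
\[
\|\lambda_{\xi,\eta,s}(b)\|_{B_e}=\|\langle\lambda_s(b)\xi,\eta\rangle\|\leq \|\lambda_s(b)\xi\|_2\,\|\eta\|_2\leq \|b\|\,\|\xi\|_2\,\|\eta\|_2,
\]
so $\lambda_{\xi,\eta,s}\in\mathcal{L}(B_s,B_e)$ with $\|\lambda_{\xi,\eta,s}\|\leq \|\xi\|_2\|\eta\|_2$, uniformly in $s$.

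For (b), I would first treat the rank-one case $\xi=j_t(c)$, $\eta=j_u(d)$ with $c\in B_t$, $d\in B_u$. Using the identity $\lambda_s(b)j_t(c)=j_{st}(bc)$ and the defining formula $\langle y,z\rangle=\sum_r y_r^* z_r$ for the $B_e$-valued inner product, one computes $\lambda_{\xi,\eta,s}(b)=\langle j_{st}(bc),j_u(d)\rangle$, which vanishes unless $st=u$, i.e. $s=ut^{-1}$. By bilinearity, for any $\xi,\eta\in c_c(\mathcal{B})$ the map $s\mapsto \lambda_{\xi,\eta,s}$ is supported on a finite subset of $G$ and therefore lies in $c_c(G,\mathcal{L}(\mathcal{B},B_e))$.

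To conclude, approximate arbitrary $\xi,\eta\in\ell^2(\mathcal{B})$ in $\|\cdot\|_2$ by $\xi_n,\eta_n\in c_c(\mathcal{B})$. Adding and subtracting $\langle\lambda_s(b)\xi_n,\eta\rangle$ and applying the Cauchy--Schwarz bound of step (a) yields
\[
\|\lambda_{\xi,\eta,s}-\lambda_{\xi_n,\eta_n,s}\|\leq \|\xi-\xi_n\|_2\,\|\eta\|_2+\|\xi_n\|_2\,\|\eta-\eta_n\|_2
\]
uniformly in $s$, so $\lambda_{\xi_n,\eta_n}\to \lambda_{\xi,\eta}$ in the sup norm. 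Since $c_0(G,\mathcal{L}(\mathcal{B},B_e))$ is closed under uniform limits, we obtain $\lambda_{\xi,\eta}\in c_0(G,\mathcal{L}(\mathcal{B},B_e))$. The only nonroutine step is the finite-support computation for $\xi,\eta\in c_c(\mathcal{B})$; the rest is standard Cauchy--Schwarz together with a density argument.
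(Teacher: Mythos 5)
Your proof is correct and follows essentially the same route as the paper: establish that $\lambda_{\xi,\eta}$ is finitely supported when $\xi,\eta\in c_c(\mathcal{B})$, then pass to general $\xi,\eta\in\ell^2(\mathcal{B})$ by the same Cauchy--Schwarz estimate $\|\lambda_{\xi,\eta,s}-\lambda_{\xi_n,\eta_n,s}\|\leq\|\xi-\xi_n\|_2\|\eta\|_2+\|\xi_n\|_2\|\eta-\eta_n\|_2$ and uniform convergence. The only difference is that you make explicit, via the rank-one computation with $j_t(c)$ and $j_u(d)$, the finite-support claim that the paper dismisses as ``easy to see''; this is a welcome addition but not a different argument.
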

	\begin{proof}
		Given $\varphi,\psi\in C_c(\mathcal{B})$ with
		$K:=\operatorname{supp}(\varphi)$ and $L:=\operatorname{supp}(\psi)$, $KL^{-1}$ is compact, and given $s\notin KL^{-1}$, it is easy to see
		that $\lambda_{\varphi,\psi,s}=0$. Therefore, $\lambda_{\varphi,\psi}\in
		c_c(G,\mathcal{L}(\mathcal{B},B_e))$. If $\xi,\eta\in\ell^2(\mathcal{B})$,  there exist sequences
		$(\varphi_n)$ and $(\psi_n)$ in $c_c(\mathcal{B})$ such that
		$\|\varphi_n-\xi\|_2\rightarrow 0$ and $\|\psi_n-\eta\|_2\rightarrow 0$. We have,
		\begin{eqnarray*}
			\|\lambda_{\varphi_n,\psi_n}-\lambda_{\xi,\eta}\|_\infty&=&
			\sup_{s\in G}\|\lambda_{\varphi_n,\psi_n,s}-\lambda_{\psi,\eta,s}\|\\
			&\leq&\|\varphi_n\|_2\|\psi_n-\eta\|_2+\|\varphi_n-\xi\|_2\|\eta\|_2\rightarrow 0.    	 	
		\end{eqnarray*}
		Thus, $\lambda_{\xi, \eta}\in c_0(G,\mathcal{L}(\mathcal{B},B_e))$.
	\end{proof}

	\section{multiplier algebra and bounded approximate identities} \label{4}

	In this section we find sufficient  conditions on Fell bundles for the Fourier and Fourier-Stieltjes spaces to be Banach algebras.

	Let $\mathcal{A}=\{A_s\}_{s\in G}$  and $\mathcal{B}=\{B_t\}_{t\in H}$ be Fell bundles over
	discrete groups $G$ and $H$. For  $s\in G$ and $t\in H$, consider the algebraic tensor product
	$A_s\odot B_t$ and put $\mathcal{A}\odot\mathcal{B}=\{A_s\odot B_t\}_{(s,t)\in G\times H}$. This is called the {\it algebraic tensor product} of $\mathcal{A}$ and $\mathcal{B}$. Let $p$ be a $C^*$-norm on
	$A_e\odot B_e$ satisfying $p(x^*x)=p(xx^*)$, for all $x\in \mathcal{A}\odot\mathcal{B}$. Put
	$\|x\|_{p}=\sqrt{p(x^*x)}$ and let $A_s\otimes_p B_t$ be the corresponding completion of $A_s\odot B_t$.    Then, $\mathcal{A}\otimes_p\mathcal{B}:=\{A_s\otimes_p B_t\}_{(s,t)\in G\times H}$ is a
	Fell bundle over  $G\times H$. We refer the reader  for more details to \cite{Ab97}.
	
	The notion of {\it Hopf coalgebra} is introduced by Turaevin in \cite{Tura2000} and  studied
	by Virelizier in \cite{Vi02}. Roughly speaking, a Hopf $G$-coalgebra is a family $D=\{D_s\}_{s\in G}$ of algebras
	endowed with a co-multiplication $\Delta=\{\Delta_{s,t} : D_{st}\rightarrow D_s\otimes D_t\}_{s,t\in G}$, a counit $\epsilon:D_e\rightarrow\mathbb{C}$
	and an antipode $S=\{S_t:H_t\rightarrow H_{t^{-1}}\}$, satisfying natural compatibility conditions.

	\begin{proposition}
		Let $\mathcal{B}=\{B_s\}_{s\in G}$ be a Fell bundle over $G$ endowed with  a family
		$\Delta=\{\Delta_{s,t} : B_{st}\rightarrow B_s\otimes_p B_t\}_{s,t\in G}$  of bounded linear maps
		satisfying
		$$
		(\Delta_{r,s}\otimes\mathrm{id}_{B_t})\Delta_{rs,t}=(\mathrm{id}_{B_r}\otimes\Delta_{s,t})\Delta_{r,st},
		$$
		for  $r,s,t\in G$. Then $B(\mathcal{B})$ and $A(\mathcal{B})$ are Banach algebras.
	\end{proposition}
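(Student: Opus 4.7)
The plan is to mimic the classical construction endowing $B(G)$ with its Banach algebra structure via the canonical coproduct $\delta_G\colon C^*(G)\to C^*(G)\otimes C^*(G)$; here the family $\{\Delta_{s,t}\}$ plays that role in the graded Fell bundle setting. Given $\varphi=\pi_{\xi,\eta}$ and $\psi=\sigma_{\xi',\eta'}$ in $B(\mathcal{B})$, I would define their product at $b\in B_r$ by pairing the tensor product representation $\pi\otimes\sigma$ on $H_\pi\otimes H_\sigma$ with the coproduct values $\Delta_{s,t}(b)\in B_s\otimes_p B_t$: schematically,
\[
(\varphi\cdot\psi)_r(b):=\bigl\langle(\pi_s\otimes\sigma_t)(\Delta_{s,t}(b))(\xi\otimes\xi'),\,\eta\otimes\eta'\bigr\rangle
\]
for $r=st$. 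The right-hand side should be realized as a coefficient of a representation $\pi\ast_\Delta\sigma$ of $\mathcal{B}$ on $H_\pi\otimes H_\sigma$ obtained by precomposing $\pi\otimes\sigma$ with $\Delta$, so that the product lives in $B(\mathcal{B})$.

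Once the product is set up, the verifications would proceed as follows. Associativity reduces directly to the coassociativity identity $(\Delta_{r,s}\otimes\mathrm{id})\Delta_{rs,t}=(\mathrm{id}\otimes\Delta_{s,t})\Delta_{r,st}$ by a Sweedler-type calculation. For boundedness I would use that each $\Delta_{s,t}$ is bounded and each $\pi_s\otimes\sigma_t$ has norm at most one on its fiber (from the estimate $\|\pi_s(b)\|\leq\|b\|$ established in Section \ref{2}), producing a universal bound $\|(\pi\ast_\Delta\sigma)_r(b)\|\leq\|\Delta_{s,t}\|\,\|b\|$. Taking the infimum over representing triples, as permitted by Proposition \ref{positivity}(v) in the saturated case (and by the definition of $\|\cdot\|_{B(\mathcal{B})}$ in general), yields submultiplicativity $\|\varphi\cdot\psi\|_{B(\mathcal{B})}\leq\|\varphi\|_{B(\mathcal{B})}\|\psi\|_{B(\mathcal{B})}$. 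For $A(\mathcal{B})$, the remaining point is that the product of two coefficients of the regular representation $\lambda$ stays a coefficient of $\lambda$; this amounts to showing that $\lambda\ast_\Delta\lambda$ is weakly contained in (an amplification of) $\lambda$, a Fell-absorption-type statement that I would establish by exploiting the Hilbert $B_e$-module structure of $\ell^2(\mathcal{B})$.

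The main obstacle lies in the first step: realizing $\pi\ast_\Delta\sigma$ as a genuine representation of $\mathcal{B}$. The multiplicativity $(\pi\ast_\Delta\sigma)_r(b)(\pi\ast_\Delta\sigma)_{r'}(b')=(\pi\ast_\Delta\sigma)_{rr'}(bb')$ appears to need a compatibility of $\Delta$ with bundle multiplication that is \emph{not} listed in the hypothesis, and so must either be read as a standing assumption inherited from the Hopf $G$-coalgebra framework mentioned above the proposition, or sidestepped entirely. The cleanest alternative is to dualize: view the family $\{\Delta_{s,t}\}$ as assembling into a coproduct $\Delta\colon C^*(\mathcal{B})\to C^*(\mathcal{B})\otimes C^*(\mathcal{B})$, and define multiplication on $B(\mathcal{B})$ by $(\varphi\cdot\psi)(x)=(\varphi\otimes\psi)\Delta(x)$, invoking Proposition \ref{positivity}(iii) in the saturated case to identify $B(\mathcal{B})$ with $C^*(\mathcal{B})^*$. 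In this formulation, both associativity (from coassociativity) and submultiplicativity ($\|\Delta\|\leq\sup_{s,t}\|\Delta_{s,t}\|$ on the dense $*$-subalgebra, extended by continuity) drop out, and the representation-theoretic check is bypassed.
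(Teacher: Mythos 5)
Your primary construction is essentially the paper's: the product of $\pi_{\xi,\eta}$ and $\rho_{\alpha,\beta}$ is obtained as a coefficient of the map gotten by precomposing the tensor product representation with the coproduct family, and submultiplicativity follows from the fiberwise norm estimates. The one technical device you are missing is how the paper resolves the grading mismatch that makes your formula ill-posed (the factorization $r=st$ is not unique): for $b\in B_s$ the paper fixes an auxiliary element $b_0^s\in B_s$ and sets $(\pi_{\xi,\eta}\rho_{\alpha,\beta})_s(b)=\langle(\pi_s\otimes\rho_s)\Delta_{s,s}(bb_0^s)(\xi\otimes\alpha),\eta\otimes\beta\rangle$; that is, it works on the diagonal of $G\times G$ and pushes $b$ into $B_{s^2}$ by multiplying with $b_0^s$ before applying $\Delta_{s,s}$ (at the cost of the product depending on the chosen family $\{b_0^s\}$, a point the paper does not discuss). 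For $A(\mathcal{B})$ the paper runs the same computation with $\lambda\otimes\lambda$ acting on $\ell^2(\mathcal{B})\otimes\ell^2(\mathcal{B})$ viewed as a Hilbert $B_e$-module and merely observes that the resulting coefficient takes values in $B_e$; it does not prove the Fell-absorption statement you correctly identify as the missing step for membership in $A(\mathcal{B})$, namely that the product is again of the form $\lambda_{\zeta,\zeta'}$ for vectors $\zeta,\zeta'\in\ell^2(\mathcal{B})$.

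The obstacle you flag --- that $\pi\otimes\rho$ precomposed with $\Delta$ need not be multiplicative, hence need not be a representation, absent a compatibility of $\Delta$ with the bundle product that is not among the hypotheses --- is genuine, and the paper's proof does not address it either: it asserts without verification that $(\pi\otimes\rho)_{(s,t)}(b\otimes c)=(\pi_s\otimes\rho_t)\Delta_{s,t}(bc)$ defines a representation of $\mathcal{B}\otimes_p\mathcal{B}$ over $G\times G$, which requires precisely such a multiplicativity (and even well-definedness on elementary tensors, since the formula factors through $b\otimes c\mapsto bc$). Your proposed dual route, assembling $\{\Delta_{s,t}\}$ into a coproduct on $C^*(\mathcal{B})$ and multiplying functionals through it, would indeed bypass the representation-theoretic check, but it needs two hypotheses not granted here: saturation, so that Proposition \ref{positivity}(iii) identifies $B(\mathcal{B})$ with $C^{*}(\mathcal{B})^*$, and extendability of $\Delta$ from $c_c(\mathcal{B})$ to a bounded (in practice $*$-homomorphic or completely positive) map on $C^*(\mathcal{B})$, which again comes back to multiplicativity. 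So neither your route nor the paper's closes this gap from the stated hypotheses alone; the proposition as written implicitly assumes the Hopf $G$-coalgebra compatibility conditions alluded to just above it, and in the concrete examples that follow (Hopf $C^*$-algebras, $W^*$-dynamical systems, the trivial bundle) those conditions do hold and your scheme goes through.
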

	\begin{proof}
		Given  representations  $\pi$ and $\rho$ of the bundle $\mathcal{B}=\{B_s\}_{s\in G}$,  we define
		$(\pi\otimes\rho)_{(s,t)}: B_s\otimes_p B_t\rightarrow\mathcal{L}(H_\pi\otimes H_\rho)$ by
		$$
		(\pi\otimes\rho)_{(s,t)}(b\otimes c):=(\pi_s\otimes\rho_t)\Delta_{s,t}(bc),\quad (s,t\in G, b\in B_s, c\in B_t).
		$$
		Then, $\pi\otimes\rho$ is a representation for Fell bundle
		$\mathcal{B}\otimes_p\mathcal{B}=\{B_s{\otimes}_p B_t\}_{(s,t)\in G\times G}$ over $G\times G$.
		For $\xi,\eta\in H_\pi$, $\alpha,\beta\in H_\rho$, and  $s\in G$, and  a fixed element $b^s_0\in B_s$, put
		$$
		(\pi_{\xi,\eta}\rho_{\alpha,\beta})_s(b):=(\pi\otimes\rho)_{\xi\otimes\alpha,\eta\otimes\beta,(s,s)}(b\otimes b^s_0),
		$$ and observe that $$ (\pi_{\xi,\eta}\rho_{\alpha,\beta})_s(b)=\langle(\pi_s\otimes\rho_s)\Delta_{s,s}(bb^s_0)
		(\xi\otimes\alpha),\eta\otimes\beta\rangle.$$ In particular,  $\pi_{\xi,\eta}\rho_{\alpha,\beta}\in B(\mathcal{B})$
		and it is not difficult to see that $\|\pi_{\xi,\eta}\rho_{\alpha,\beta}\|\leq\|\pi_{\xi,\eta}\|\|\rho_{\alpha,\beta}\|$.
		Thus, $B(\mathcal{B})$ is a Banach algebra.

		Next, we may view $\ell^2(\mathcal{B})\otimes\ell^2(\mathcal{B})$ as a right Hilbert $B_e$-module via
		$(\xi_1\otimes\xi_2)\cdot a=\xi_1\otimes\xi_2\cdot a $ and   $B_e$-valued inner product
		$$
		\langle\xi_1\otimes\xi_2,\xi_3\otimes\xi_4\rangle=\langle\xi_2,\lambda_e(\langle\xi_1,\xi_3\rangle)\xi_4\rangle.
		$$
		Define
		$(\lambda\otimes\lambda)_{s,t}:B_s\otimes_{\mathrm{min}} B_t\rightarrow\mathcal{L}(\ell^2(\mathcal{B})\otimes\ell^2(\mathcal{B}))$
		by $$(\lambda\otimes\lambda)_{s,t}(b\otimes c)=(\lambda_s\otimes\lambda_t)\Delta_{s,t}(bc),$$
		and observe that,
		\begin{eqnarray*}
			(\lambda_{\xi_1,\xi_2}\lambda_{\xi_3,\xi_4})_s(b)&:=&
			(\lambda\otimes\lambda)_{\xi_1\otimes\xi_3,\xi_2\otimes\xi_4,(s,s)}(b\otimes b_0^s)\\
			&=&\langle(\lambda_s\otimes\lambda_s)\Delta_{s,s}(bb^s_0)
			(\xi_1\otimes\xi_3),\xi_2\otimes\xi_4\rangle\in B_e.
		\end{eqnarray*}
		A similar norm inequality holds in this case as well, and $A(\mathcal{B})$ is also a Banach algebra. 	
	\end{proof}

	\begin{example} We provide concrete examples of cases in which Fourier-Stieltjes space is a Banach algebra.
		
		\vspace{.2cm}
		$(i)$ Let $(A,G,\gamma)$ be a $C^*$-dynamical system such that $A$ is a Hopf $C^*$-algebra; i.e., there is a
		comultiplication $\Delta:A\rightarrow M(A\otimes_{\mathrm{min}}A)$. It is known that $A^*$ is a Banach algebra under
		$fg=(f\otimes g)\circ\Delta$, $f,g\in A^*$. Suppose that $\mathcal{B}=\{A_s\}_{s\in G}$ is the Fell bundle
		of $(A,G,\gamma)$.
		Let $\pi$ and $\rho$ be  representations of $\mathcal{A}$ on Hilbert spaces $H_\pi$ and $H_\rho$.
		Then each $A_s$  is  isomorphic as a Banach space to $A$  and for  $\xi,\eta\in H_\pi$, $\alpha,\beta\in H_\rho$, and $s\in G$, one may
		consider $\pi_{\xi,\eta,s}$ and $\rho_{\alpha,\beta,s}$ as  bounded linear functionals on $A$. This allows ud to consider
		$\pi_{\xi,\eta,s}\otimes\rho_{\alpha,\beta,s}:M(A\otimes_{\mathrm{min}}A)\rightarrow\mathbb{C}$, defined by
		$$
		\pi_{\xi,\eta,s}\otimes\rho_{\alpha,\beta,s}(a\otimes b)=\langle\pi_s((s,a))\xi,\eta\rangle\langle\rho_s((s,b))\alpha,\beta\rangle.
		$$
		Thus, $\pi\otimes\rho$ is a representation of the Fell bundle $\mathcal{A}\otimes_{\mathrm{min}}\mathcal{A}=\{A_s\otimes_{\mathrm{min}}A_t\}_{(s,t)\in G\times G}$ on
		Hilbert space $H_\pi\otimes H_\rho$, with
		$$
		(\pi\otimes\rho)_{(s,t)}(a\otimes b)=\pi_s((s,a))\otimes\rho_t((t,b)).
		$$
		
		Finally, given $\pi_{\xi,\eta}, \rho_{\alpha,\beta}\in B(\mathcal{A}) $,
		$$
		(\pi_{\xi,\eta} \rho_{\alpha,\beta})_s((s,a))=\langle(\pi_{\xi,\eta,s}\otimes\rho_{\alpha,\beta,s}),\Delta(a)\rangle,
		$$
		showing that  the Fourier-Stieltjes space $B(\mathcal{A})$ is closed under multiplication. The other conditions are now straightforward.
		
		\vspace{.2cm}
		$(ii)$ Let $(M,\Delta)$ be a Hopf-von Neumann algebra and $(M,G,\gamma)$ be a $W^*$-dynamical system.
		Let  $\mathcal{M}=\{M_s\}_{s\in G}$ be the corresponding Fell bundle.
		For every $s\in G$, define
		$f_s:M\rightarrow M_s$ by $f_s(a)=(s,a)$. It is straightforward to see that
		$ f_r\otimes f_s:M\bar{\otimes}M\rightarrow M_r{\otimes}_p M_s$ is a bounded linear map,
		where $(f_r\otimes f_s)(a\otimes b)=f_r(a)\otimes f_s(b)$, for  $r,s\in G$ and $a,b\in M$, where the domain is the von Neumann algebra tensor product.
		The coproduct family
		$$
		\Delta_{r,s}:M_{rs}\rightarrow M_r{\otimes}_p M_s,\quad\Delta_{r,s}((rs,a))=(f_r\otimes f_s)(\Delta(a))
		$$
		satisfies $ (\Delta_{r,s}\otimes\mathrm{id}_{B_t})\Delta_{rs,t}=(\mathrm{id}_{B_r}\otimes\Delta_{s,t})\Delta_{r,st}$.	
		In particular,  the Fourier-Stieltjes space $B(\mathcal{B})$ and Fourier space $A(\mathcal{B})$ are Banach algebras.
		In fact, given $\pi_{\xi,\eta},\rho_{\alpha,\beta}\in B(\mathcal{B})$,
		\begin{eqnarray*}
			(\pi_{\xi,\eta}\rho_{\alpha,\beta})_s(s,b)&=&\langle(\pi_s\otimes\rho_s)\Delta_{s,s}(s^2,b)\xi\otimes\alpha,
			\eta\otimes\beta\rangle\\
			&=&\langle(\pi_s\otimes\rho_s)(f_s\otimes f_s)(\Delta(b))\xi\otimes\alpha,
			\eta\otimes\beta\rangle.	 	
		\end{eqnarray*}

		\vspace{.2cm}
		$(iii)$ For the trivial Fell bundle $\mathcal{B}=G\times\mathbb{C}=\{B_s\}_{s\in G}$,
		let us observe  that $A(\mathcal{B})$ is canonically isomorphic to the classical Fourier algebra
		$A(G)$.
		Define
		$$
		\Delta_{s,t}:B_{st}\rightarrow B_s{\otimes}_p B_t,\quad (st,z)\mapsto (s,z){\otimes} (t,1).
		$$
		This a compatible coproduct family. Define,
		$\theta:\mathcal{B}\rightarrow\mathbb{C}$ by $\theta(b)=z_b$, for  $s\in G$ and $b=(s,z_b)\in B_s$.
		Then, $\theta(b^*)=\overline{z_b}$ and
		$$
		\langle\lambda_s(b)\xi,\eta\rangle=\left(e,\overline{z_b}\langle\lambda^G_s\theta\xi,
		\theta\eta\rangle_{\ell^2(G)}\right),
		$$	
		where $\lambda^G$ is the left regular representation of $G$.
		Define $T_b:A(\mathcal{B})\rightarrow A(G)$ by
		$$
		T_b(\lambda_{\xi,\eta})(s)=\frac{1}{\overline{z_b}}\theta\left(\langle\lambda_s(b)\xi,\eta\rangle\right)=
		\langle\lambda^G_s{\theta\xi,\theta\eta}\rangle_{\ell^2(G)},
		$$
		where $b=(s,z_b)$ is a nonzero element of $B_s$.
		Consider the map $T_b:A(\mathcal{B})\rightarrow A(G)$. It is easy
		to see that $T_b$ is a bounded linear map. Moreover,
		\begin{eqnarray*}
			T_b(\lambda_{\xi,\eta}\lambda_{\alpha,\beta})(s) &=&T_b((\lambda\otimes\lambda)_{\xi\otimes\alpha,\eta\otimes\beta})(s)\\
			&=&\frac{1}{\overline{z_b}}\theta(\langle(\lambda_s\otimes\lambda_s)\Delta_{s,s}(b(s,1))(\xi\otimes\alpha),
			\eta\otimes\beta\rangle) \\
			&=&	\frac{1}{\overline{z_b}}\theta(\langle(\lambda_s\otimes\lambda_s)(b\otimes(s,1))(\xi\otimes\alpha),
			\eta\otimes\beta\rangle)\\
			&=& \frac{1}{\overline{z_b}}\theta\left(\langle\lambda_s(b)\xi,\eta\rangle\otimes
			\langle\lambda_s((s,1))\alpha,\beta\rangle\right)\\
			&=&\frac{1}{\overline{z_b}}\theta\left((e,\overline{z_b}\langle\lambda^G_s{\theta\xi,\theta\eta}\rangle_{\ell^2(G)})
			\otimes(e,\langle\lambda^G_s{\theta\alpha,\theta\beta}\rangle_{\ell^2(G)}) \right)\\
			&=&\langle\lambda^G_s{\theta\xi,\theta\eta}\rangle_{\ell^2(G)}
			\langle\lambda^G_s{\theta\alpha,\theta\beta}\rangle_{\ell^2(G)}\\
			&=&T_b(\lambda_{\xi,\eta})(s)T_b(\lambda_{\alpha,\beta})(s).
		\end{eqnarray*}
		Thus, $T_b$ is homomorphism. In fact, $T_b(\lambda_{\xi,\eta})=\lambda^G_{\theta\xi,\theta\eta}$, and
		since $\|\lambda_{\xi,\eta}\|=\|\lambda^G_{\theta\xi,\theta\eta}\|$, it follows that  	
		$T_b$ is an isometric isomorphism.
	\end{example}

	Back to general Fell bundles, since $B_sB_e\subseteq B_s$, it follows that
	$\ell^2(\mathcal{B})$ is a Hilbert $B_e$-module if we put $(\xi\cdot a)(s)=\xi(s)a$, for
	$\xi\in\ell^2(\mathcal{B})$ and $a\in B_e$. Then
	for any Hilbert space $H$,
	$\ell^2(\mathcal{B})\otimes H$ is a Hilbert
	$B_e$-module via $(\xi\otimes h)\cdot a=\xi\cdot a\otimes h$ and
	$$
	\langle\xi\otimes h,\eta\otimes k\rangle=\langle h,k\rangle\langle\xi,\eta\rangle\in B_e,
	\quad (\xi,\eta\in\ell^2(\mathcal{B})\;\mathrm{and}\; h,k\in H).
	$$

	For a representation $\pi:\mathcal{B}\rightarrow\mathcal{L}(H_\pi)$,
	consider
	$\lambda\otimes\pi:\mathcal{B}\rightarrow\mathcal{L}(\ell^2(\mathcal{B})\otimes H_\pi)$, defined by
	$(\lambda\otimes\pi)_s(b):=(\lambda_s\otimes\pi_s)\Delta_{s,s}(bb^s_0)$, for
	$s\in G$ and $b\in B_s$.

	For a Banach algebra $A$, let $M(A)$ be the Banach algebra of double centralisers, that is,
	pairs $(L, R)$ of linear maps on $A$ with
	$$
	\left\{\begin{array}{c}
		L(a b)=L(a) b, \quad R(a b)=a R(b) \\
		a L(b)=R(a) b
	\end{array} \quad(a, b \in \mathcal{A})\right..
	$$
	We denote by $M(A(\mathcal{B}))$ the multiplier algebra of $A(\mathcal{B})$.
	In the group case, $A(G)$ has a bounded approximate identity if and only if $M(A(G))=B(G)$. We show that a similar statement holds for Fell bundles.
	
	\begin{lemma}\label{representation}
		Let $\mathcal{B}$ be a Fell bundle over a discrete group $G$.
		If $\pi$ is a  representation of $\mathcal{B}$, then  $\lambda_{\xi,\eta}\pi_{\alpha,\beta}\in A(\mathcal{B})$.
	\end{lemma}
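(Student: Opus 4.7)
The plan is to establish a Fell-bundle analog of Fell's absorption principle: to show that the tensor-twisted representation $\lambda\otimes\pi$ of $\mathcal{B}$ on the Hilbert $B_{e}$-module $\ell^{2}(\mathcal{B})\otimes H_{\pi}$ is unitarily equivalent to a direct sum of copies of the regular representation $\lambda$ acting on $\ell^{2}(\mathcal{B})$. Once this is in place, the coefficient
\[
(\lambda_{\xi,\eta}\pi_{\alpha,\beta})_{s}(b)=\langle(\lambda_{s}\otimes\pi_{s})\Delta_{s,s}(b\,b_{0}^{s})(\xi\otimes\alpha),\eta\otimes\beta\rangle,
\]
taken from the formula in the preceding proposition, can be rewritten as $\langle\lambda_{s}(b)\xi',\eta'\rangle$ for suitable $\xi',\eta'\in\ell^{2}(\mathcal{B})$, which is exactly membership in $A(\mathcal{B})$.

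First I would fix an orthonormal basis $\{e_{i}\}_{i\in I}$ of $H_{\pi}$, yielding the Hilbert $B_{e}$-module decomposition $\ell^{2}(\mathcal{B})\otimes H_{\pi}\cong\bigoplus_{i\in I}\ell^{2}(\mathcal{B})$. In these coordinates the coefficient expands as a double sum over $I$ of $B_{e}$-valued inner products taken in $\ell^{2}(\mathcal{B})$. Next I would construct a unitary $W$ on $\ell^{2}(\mathcal{B})\otimes H_{\pi}$ intertwining the operator $(\lambda_{s}\otimes\pi_{s})\Delta_{s,s}(b\,b_{0}^{s})$ with an operator of the form $\lambda_{s}(b)\otimes T_{s,b_{0}^{s}}$, the model being the group-case formula $W(\delta_{t}\otimes h)=\delta_{t}\otimes\pi_{t}h$, adapted here by using the coproduct $\Delta$ to transfer the action of $\pi_{t}$ through the fibers. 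Applying $W$ turns the coefficient into a sum $\sum_{i}\langle\lambda_{s}(b)\xi_{i}',\eta_{i}'\rangle_{B_{e}}$ with $\xi_{i}',\eta_{i}'\in\ell^{2}(\mathcal{B})$.

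Finally, because $\ell^{2}(\mathcal{B})$ contains countably many orthogonal copies of itself (or equivalently, the regular representation is equivalent to its amplification), the resulting sum can be collapsed into a single coefficient $\langle\lambda_{s}(b)\xi',\eta'\rangle_{B_{e}}$ by assembling $\xi'=\bigoplus_{i}\xi_{i}'$ and $\eta'=\bigoplus_{i}\eta_{i}'$ in an enlarged copy of $\ell^{2}(\mathcal{B})$ that embeds isometrically back into $\ell^{2}(\mathcal{B})$. This yields $\lambda_{\xi,\eta}\pi_{\alpha,\beta}=\lambda_{\xi',\eta'}\in A(\mathcal{B})$.

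The main obstacle is constructing the intertwining unitary $W$ in the Fell-bundle setting. In the group case $\pi_{t}$ is a unitary on $H_{\pi}$, so the formula $W(\delta_{t}\otimes h)=\delta_{t}\otimes\pi_{t}h$ makes sense directly. For a Fell bundle, $\pi_{t}$ is only defined as a linear map $B_{t}\to\mathcal{L}(H_{\pi})$, so a direct imitation fails; one must use the coproduct $\Delta_{t,t}$ and the chosen elements $b_{0}^{t}$ in an essential way to synthesize a well-defined unitary, while simultaneously verifying that the final coefficient is independent of the choice of $b_{0}^{s}$.
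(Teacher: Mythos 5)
Your proposal reduces the whole lemma to a Fell--bundle version of the absorption principle: a unitary $W$ on $\ell^{2}(\mathcal{B})\otimes H_{\pi}$ intertwining $(\lambda_{s}\otimes\pi_{s})\Delta_{s,s}(b\,b_{0}^{s})$ with an operator of the form $\lambda_{s}(b)\otimes T_{s,b_0^s}$. You correctly flag this as the main obstacle, but you never construct $W$, and in the generality in which the lemma is stated there is no reason such a $W$ should exist. The group-case formula $W(\delta_{t}\otimes h)=\delta_{t}\otimes\pi_{t}h$ works because $\pi_{t}$ is a single unitary on $H_\pi$ and because the coproduct is the very special one $\delta_t\mapsto\delta_t\otimes\delta_t$; here $\Delta=\{\Delta_{s,t}\}$ is only a coassociative family of bounded linear maps, $\pi_s$ is a linear map defined on the fiber $B_s$ rather than an operator indexed by $s$, and the expression depends on the arbitrarily fixed elements $b_0^s$. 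So the claim that the coefficient ``can be rewritten as $\langle\lambda_{s}(b)\xi',\eta'\rangle$'' is precisely the content of the lemma and is left unproved: as written, the argument is a plan whose central step is missing. (The remaining bookkeeping --- decomposing $H_\pi$ along an orthonormal basis, amplifying $\lambda$, and re-embedding the direct sum into $\ell^2(\mathcal{B})$ --- is routine once $W$ exists, but that is the easy part.)

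For comparison, the paper avoids any absorption statement and argues directly: it expands $\Delta_{s,s}(bb_0^s)=\sum_{i}b_i\otimes d_i$ and computes
\[
(\lambda_{\xi,\eta}\pi_{\alpha,\beta})_s(b)=\sum_{i=1}^{\infty}\langle\lambda_s(b_i)\xi,\eta\rangle\,\langle\pi_s(d_i)\alpha,\beta\rangle\in B_e,
\]
exhibiting the product as a convergent sum of regular-representation coefficients weighted by the scalars $\langle\pi_s(d_i)\alpha,\beta\rangle$, and concludes membership in $A(\mathcal{B})$ from there. If you want to keep your route, you would need either to restrict to the concrete coproducts actually used later in the paper (e.g.\ $\Delta_{s,t}(st,a)=(s,1)\otimes(t,a)$ for the bundle of a unital $C^*$-dynamical system, where a genuine absorption argument can be carried out with $\pi_s(s,1)$ playing the role of the unitary), or replace the appeal to $W$ by the direct expansion above.
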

	\begin{proof}
		Let $\lambda_{\xi,\eta}\in A(\mathcal{B})$ and $\pi_{\alpha,\beta}\in B(\mathcal{B})$. Then
		\begin{eqnarray*}
			(\lambda_{\xi,\eta}\pi_{\alpha,\beta})_s(b)&=&\langle(\lambda_{s} \otimes \pi_{s})\Delta_{s,s}(bb^s_0)(\xi\otimes\alpha),
			\eta\otimes\beta \rangle	\\
			&=&\langle(\lambda_s\otimes \pi_s)\sum_{i=1}^{\infty}(b_i\otimes d_i)(\xi\otimes\alpha),\eta\otimes\beta\rangle\\
			&=&\sum_{i=1}^{\infty}\langle\lambda_s(b_i)\xi\otimes\pi_s(d_i)\alpha,\eta\otimes\beta\rangle\\
			&=&\sum_{i=1}^{\infty}\langle\lambda_s(b_i)\xi,\eta\rangle\langle\pi_s(d_i)\alpha,\beta\rangle\in B_e.
		\end{eqnarray*}
		Thus,  $\lambda_{\xi,\eta}\pi_{\alpha,\beta}\in A(\mathcal{B})$.
	\end{proof}
	
	\begin{theorem}
		Let $\mathcal{B}$ be a Fell bundle over a discrete group $G$. If $A(\mathcal{B})$ is a
		Banach algebra with  a
		bounded approximate identity, then $M(A(\mathcal{B}))= B(\mathcal{B})$. 	
	\end{theorem}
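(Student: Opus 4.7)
The plan is to establish the two containments $B(\mathcal{B}) \subseteq M(A(\mathcal{B}))$ and $M(A(\mathcal{B})) \subseteq B(\mathcal{B})$ separately, mirroring the classical argument for $A(G)$ and $B(G)$.

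For the first containment, given $\varphi \in B(\mathcal{B})$, I would define $L_\varphi(u) = u\varphi$ and $R_\varphi(u) = \varphi u$ for $u \in A(\mathcal{B})$, where the products on the right are provided by Lemma \ref{representation} (and its symmetric analog using a representation of the form $\pi \otimes \lambda$). The double centralizer identities $L_\varphi(uv) = L_\varphi(u)v$, $R_\varphi(uv) = u R_\varphi(v)$ and $u L_\varphi(v) = R_\varphi(u) v$ then follow from associativity of the products among $A(\mathcal{B})$ and $B(\mathcal{B})$ induced by the compatible coproduct family on $\mathcal{B}$. The bound $\|L_\varphi\|, \|R_\varphi\| \leq \|\varphi\|_{B(\mathcal{B})}$ is a direct consequence of the tensor-product norm estimate used to prove that $B(\mathcal{B})$ is a Banach algebra, and the map $\varphi \mapsto (L_\varphi, R_\varphi)$ is injective thanks to the BAI.

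For the reverse containment, let $(L, R) \in M(A(\mathcal{B}))$ and fix a BAI $(e_\alpha)$ of $A(\mathcal{B})$. Since $L$ is bounded, $\{L(e_\alpha)\}$ is a norm-bounded net in $A(\mathcal{B})$. By Proposition \ref{positivity}(iii), $B(\mathcal{B})$ is isometrically isomorphic to $C^*(\mathcal{B})^*$, so Banach--Alaoglu allows me to pass to a subnet and extract a weak-$*$ cluster point $\varphi \in B(\mathcal{B})$ of the images of $L(e_\alpha)$ under the natural mapping $A(\mathcal{B}) \to B(\mathcal{B})$, obtained by scalarizing the $B_e$-valued coefficient functions of the regular representation via states of $B_e$ (interior-tensoring $\ell^2(\mathcal{B})$ with GNS Hilbert spaces).

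To conclude that $L = L_\varphi$, I would test both sides against a fixed $u \in A(\mathcal{B})$ and use the double convergence
$$
L(u) \;=\; \lim_\alpha L(e_\alpha \cdot u) \;=\; \lim_\alpha L(e_\alpha) \cdot u \;=\; \varphi \cdot u,
$$
where the first equality uses the BAI property together with continuity of $L$, the second uses that $L$ is a left multiplier, and the third follows from the weak-$*$ convergence $L(e_\alpha) \to \varphi$ together with weak-$*$ continuity of the right action of the fixed element $u$. A symmetric argument handles $R$. The main obstacle I expect is the scalarization step, making the embedding $A(\mathcal{B}) \hookrightarrow B(\mathcal{B})$ precise, since $A(\mathcal{B})$-elements are $B_e$-valued while $B(\mathcal{B})$-elements are $\mathbb{C}$-valued; I would handle this by appealing to states of $B_e$ and the coefficient-function structure of the scalarized regular representation, so that weak-$*$ limits genuinely live inside $B(\mathcal{B})$ and the final convergence line above is justified.
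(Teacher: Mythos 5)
Your overall strategy --- let $B(\mathcal{B})$ act on $A(\mathcal{B})$ through Lemma \ref{representation} for one containment and, conversely, recover the implementing element as a limit of $L(e_\alpha)$ (resp.\ $R(e_\alpha)$) --- is the same as the paper's. The difference is in how that limit is extracted. The paper never leaves the $B_e$-valued world: it regards each $R(e_i)\in A(\mathcal{B})$ as an element of $\mathcal{L}(C^{*}(\mathcal{B}),B_e)$, produces $w$ there by a boundedness argument, and then checks $uw(f)=\lim_i R(ue_i)(f)=R(u)(f)$. You instead reroute the limit through $C^{*}(\mathcal{B})^{*}$ via a scalarization, and that rerouting is where the gaps sit.

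Concretely: (1) you invoke Proposition \ref{positivity}(iii) to identify $B(\mathcal{B})$ with $C^{*}(\mathcal{B})^{*}$ and apply Banach--Alaoglu, but that proposition is proved only for \emph{saturated} bundles, and saturation is not among the hypotheses of this theorem, so the weak-$*$ compact ball you need is not available in the stated generality. (2) The scalarization is lossy: composing the $B_e$-valued coefficient functions $\lambda_{\xi,\eta}$ with a single state $\omega$ of $B_e$ does not determine them unless $B_e=\mathbb{C}$, so a weak-$*$ cluster point of $\omega\circ L(e_\alpha)$ cannot in general reconstruct the multiplier; ranging over all states destroys the single compact set you wanted, and different states may yield different cluster points along different subnets. (3) Even granting a cluster point $\varphi$, the final identification $\lim_\alpha L(e_\alpha)\cdot u=\varphi\cdot u$ requires the right action $\psi\mapsto\psi\cdot u$ to be continuous from the weak-$*$ topology of $B(\mathcal{B})$ into some topology on $A(\mathcal{B})$ that is Hausdorff against the norm limit $L(u)$; unlike the group case, the product here is not pointwise but goes through the coproduct $\Delta_{s,s}(bb_0^{s})=\sum_i b_i\otimes d_i$ (an infinite sum of elementary tensors), so pointwise convergence of $\psi_\alpha$ on the fibers does not obviously give convergence of $(\psi_\alpha\cdot u)_s(b)$. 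You rightly flag the scalarization as the main obstacle, but it is not a technicality to be smoothed over afterwards --- avoiding it is precisely why the paper's proof stays inside $\mathcal{L}(C^{*}(\mathcal{B}),B_e)$ rather than passing to $C^{*}(\mathcal{B})^{*}$.
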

	\begin{proof}
		Let $(e_{i})_i=(\lambda_{\xi_i,\eta_i})_i$ be a bounded approximate identity for $A(\mathcal{B})$ with $\| e_{i}\|= 1$.
		By Lemma \ref{representation},  for any $w\in B(\mathcal{B})$,
		the bounded linear maps $L_w:A(\mathcal{B})\rightarrow A(\mathcal{B})$ and
		$R_w:A(\mathcal{B})\rightarrow A(\mathcal{B})$, given by
		$L_w(v)=wv$ and $R_w(v)=vw$, are well defined.
		
		Let
		$(L,R)\in M(A(\mathcal{B}))$.
		Each element $R(e_{i})$ may be regarded as bounded linear map from $C^{*}(\mathcal{B})$ into $B_e$ via
		$$
		R(e_i)(f)=\sum_{s\in G}\langle\lambda_s(f_s)\xi_i,\eta_i \rangle , \quad (f\in \ell^1(\mathcal{B})).
		$$
		Since, $\sup_{i\in I}\|R(e_i)(f)\|<\infty$, by the uniform boundedness principle, there exists
		$w\in\mathcal{L}(C^{*}(\mathcal{B}),B_e)$ such that $w(f)=\lim_i R(e_i)(f)$, for  $f\in\ell^1(\mathcal{B})$.
		
		Fix $u \in A(\mathcal{B}))$. Then,
		$$
		uw(f)=\lim_i(uR(e_i))(f)=\lim_i R(ue_i)(f)=R(u)(f),\quad(f\in\ell^1(\mathcal{B})).
		$$
		Therefore, $R=R_w$, for some $w\in B(\mathcal{B})$, where
		$R_w(v)=vw$.
		Similarly,
		$L(v)=L_w(v)$, where
		$L_w(v)=wv$. Thus, $M(A(\mathcal{B}))= B(\mathcal{B})$, as required.
	\end{proof}
	
	Let $(A,G,\gamma)$ be a $C^*$-dynamical system such that $A$ is unital with unit element $1$ and let
	$\mathcal{B}=\{B_s\}_{s\in G}$ be the corresponding Fell bundle. For any subset $V\subseteq G$, define
	$1_V:G\rightarrow\mathcal{B}$ by $1_V(s)=(s,1)$, for $s\in V$, and $1_V(s)=(s,0)$, otherwise.
	
	The next lemma extends Eymard's trick \cite{Eymard64}.
	
	\begin{lemma}\label{eym-trick}
		Let $\mathcal{B}=\{B_s\}_{s\in G}$ be the Fell bundle of a C*-dynamical system $(A,G,\gamma)$,
		$U$ be a subset of  $G$ and  $K\subseteq U$ be a compact set.	
		There exists $u\in A(\mathcal{B})$ such that $u_s(s,a)=(e,a^*)$, for $s\in K$ and
		$u_s(s,a)=(s,0)$, otherwise.  	
	\end{lemma}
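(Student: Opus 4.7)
The plan is to realize $u$ as a coefficient $\lambda_{\xi,\eta}$ of the regular representation for an explicit, finitely supported pair $\xi,\eta\in c_{c}(\mathcal{B})\subseteq\ell^{2}(\mathcal{B})$. Since $G$ is discrete, the hypothesis that $K$ is compact merely means that $K$ is finite; the set $U\supseteq K$ plays no essential role beyond the fact that the $u$ produced will be supported on $K$ and hence on $U$. This is exactly in the spirit of Eymard's classical discrete-group trick, transported to the Fell-bundle setting.

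The candidate construction is to take $\eta:=j_{e}((e,1))$, i.e.\ $\eta_{e}=(e,1)$ and $\eta_{t}=0$ for $t\neq e$, together with
$$\xi:=\sum_{s\in K} j_{s^{-1}}\bigl((s^{-1},1)\bigr),$$
so that $\xi_{s^{-1}}=(s^{-1},1)$ for $s\in K$ and $\xi_{t}=0$ otherwise. Both sections lie in $c_{c}(\mathcal{B})\subseteq\ell^{2}(\mathcal{B})$ by finiteness of $K$, so $u:=\lambda_{\xi,\eta}\in A(\mathcal{B})$ by the very definition of the Fourier space. To verify the pointwise formula one expands
$$u_{s}((s,a))=\langle\lambda_{s}((s,a))\xi,\eta\rangle=\sum_{t\in G}\xi_{s^{-1}t}^{*}\,(s,a)^{*}\,\eta_{t}$$
using $(\lambda_{s}(b)\xi)_{u}=b\cdot\xi_{s^{-1}u}$ and the $B_{e}$-valued inner product $\langle y,z\rangle=\sum_{s}y_{s}^{*}z_{s}$. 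The choice of $\eta$ restricts the sum to the single index $t=e$, and the choice of $\xi$ makes the surviving term vanish exactly when $s\notin K$, which gives the ``otherwise'' case of the statement.

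For $s\in K$ the computation reduces to evaluating the triple product $(s,1)(s^{-1},\gamma_{s^{-1}}(a^{*}))(e,1)$ by the bundle multiplication $(u,x)(v,y)=(uv,x\gamma_{u}(y))$ of Example~\ref{bundle}$(ii)$; the two $\gamma$-twists (one coming from $\xi_{s^{-1}}^{*}$, one from $(s,a)^{*}$) compose to the identity on the $A$-coordinate, producing $(e,a^{*})$ as desired. The main point to watch is the bookkeeping of these twists: placing $K$ on the $s^{-1}$ side rather than the $s$ side in the choice of the support of $\xi$ is precisely what forces the cancellation $\gamma_{s}\circ\gamma_{s^{-1}}=\mathrm{id}_{A}$ that collapses the triple product cleanly to $(e,a^{*})$. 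Once this cancellation is observed the verification is mechanical.
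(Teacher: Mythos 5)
Your proof is correct, and it follows the same basic strategy as the paper's own argument --- Eymard's trick, exhibiting $u$ explicitly as a coefficient $\lambda_{\xi,\eta}$ of the regular representation with finitely supported sections --- but your choice of vectors is genuinely different, and the difference matters. The paper takes $u=\tfrac{1}{|V|}\lambda_{1_V,1_{KV}}$ for a finite set $V\ni e$, whereas you take $\eta=j_e((e,1))$ and $\xi$ supported on $K^{-1}$ with entries $(s^{-1},1)$. Placing $K$ on the inverse side is exactly what makes the two $\gamma$-twists cancel: $\xi_{s^{-1}}^*=(s^{-1},1)^*=(s,1)$, and multiplying this against $(s,a)^*=(s^{-1},\gamma_{s^{-1}}(a^*))$ under the rule $(u,x)(v,y)=(uv,x\gamma_u(y))$ yields $(e,\gamma_s\gamma_{s^{-1}}(a^*))=(e,a^*)$ on the nose. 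If one carries out the paper's computation while keeping track of these twists, the generic summand is $(e,\gamma_{r^{-1}}(a^*))$ rather than $(e,a^*)$, so the displayed identity $\sum_{r\in sV\cap KV}(e,a^*)$ is only valid for a trivial action; your arrangement of the supports repairs this. Two further small gains: your $u$ vanishes identically off $K$ (so the literal ``otherwise'' clause is satisfied and $U$ is indeed irrelevant, whereas the paper's $u$ only vanishes off $KVV^{-1}$ and is a fractional multiple of $(e,a^*)$ in between), and no normalizing constant is needed. The one cosmetic point is that $u_s$ takes values in $B_e$, so the ``otherwise'' value in the statement should be read as $(e,0)=0$ rather than $(s,0)$; your computation produces exactly $0\in B_e$, which is what is wanted.
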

	\begin{proof}
		Let $V$ be a finite neighborhood  	of $e\in G$. Define $u=\frac{1}{|V|}\lambda_{1_V,1_{KV}}$, where
		$|V|$ is the cardinal of $V$. Then
		\begin{eqnarray*}
			u_s(s,a)&=&\frac{1}{|V|}\langle\lambda_s(s,a)1_V,1_{KV}\rangle\\
			&=&\frac{1}{|V|}\sum_{r\in G}(\lambda_s(s,a)1_V(r))^*1_{KV}(r)\\
			&=&\frac{1}{|V|}\sum_{r\in G}(1_V(s^{-1}r))^*(s^{-1},a^*)1_{KV}(r)\\
			&=&\frac{1}{|V|}\sum_{r\in sV\cap KV}(r^{-1}s,1)(s^{-1},a^*)(r,1)\\
			&=&\frac{1}{|V|}\sum_{r\in sV\cap KV}(e,a^*)=\frac{|sV\cap KV|}{|V|}(e,a^*).
		\end{eqnarray*}
		Now, if $s\in K$, then $u_s(s,a)=(e,a^*)$, and if $s\notin U$, then $sV\cap KV=\emptyset$, and so $u_s(s,a)=(e,0)$.	
	\end{proof}
	
	Let $\mathcal{B}=\{B_s\}_{s\in G}$ be the Fell bundle of a system $(A,G,\gamma)$, where $A$ is unital with unit element $1$. Define
	$\Delta_{s,t}:B_{st}\rightarrow B_s\otimes_{min} B_t$ by
	$\Delta_{s,t}(st,a)=(s,1)\otimes (t,a)$. It is easy to see that,
	$$
	(\Delta_{r,s}\otimes\mathrm{id}_{B_t})\Delta_{rs,t}=(\mathrm{id}_{B_r}\otimes\Delta_{s,t})\Delta_{r,st}.
	$$
	Next, if $\pi_{\alpha_1,\beta_1}, \rho_{\alpha_2,\beta_2}\in B(\mathcal{B})$, then
	\begin{eqnarray*}
		(\pi_{\alpha_1,\beta_1}\rho_{\alpha_2,\beta_2})_s(s,a)&=&\langle(\pi_s\otimes\rho_s)(\Delta_{s,s}(s^2,a))\alpha_1\otimes\alpha_2,\beta_1\otimes\beta_2\rangle\\
		&=&	\langle(\pi_s\otimes\rho_s)((s,1)\otimes(s,a))\alpha_1\otimes\alpha_2,\beta_1\otimes\beta_2\rangle\\
		&=&\langle\pi_s(s,1)\alpha_1,\beta_1\rangle\langle\rho_s(s,a)\beta_1,\beta_2\rangle.
	\end{eqnarray*}
	Therefore, $B(\mathcal{B})$ is a Banach algebra. Similarly, one can show that $A(\mathcal{B})$ is a Banach algebra. Moreover,
	for any $\lambda_{\xi,\eta}\in A(\mathcal{B})$ and $\pi_{\alpha,\beta}\in B(\mathcal{B})$,
	$$
	\lambda_{\xi,\eta}\pi_{\alpha,\beta}\in A(\mathcal{B}).
	$$
	Now we have $\ell^1(\mathcal{B})\cong \ell^1(G, A)$, and  $B(\mathcal{B})=C^*(\mathcal{B})^*=(A\rtimes G)^*$. Also, under the canonical correspondence discussed before,
	$P(\mathcal{B})$ is associated  to
	$$
	\{\phi:G\rightarrow A^* : \phi(s)(a)=\varphi(au_s),~\mathrm{for~ some}~\mathrm{positive~ functional~}  \varphi ~\mathrm{on}~ C^*(\mathcal{B})\}
	$$
	In this case, for every $\lambda_{\xi,\eta}\in A(\mathcal{B})$,
	$$\lambda^*_{\xi,\eta,s}(s,a):=\langle\lambda_s(s,a)\xi,\eta\rangle^*=\langle\lambda_{s^{-1}}(s^{-1},a^*)\eta,\xi\rangle=
	\lambda_{\eta,\xi,s^{-1}}(s^{-1},a^*).
	$$
	Therefore, $A(\mathcal{B})$ is a  Banach $*$-algebra. Similarly $B(\mathcal{B})$ is a Banach $*$-algebra.
	\begin{theorem}
		Let $\mathcal{B}$ be the Fell bundle of C*-dynamical system $(A,G,\gamma)$ with  $A$  unital. Then,
		
		\vspace{.2cm}
		${(i)}$ the span closure of $P(\mathcal{B})\cap c_c(G,A^*)$
		is isomorphic to  $A(\mathcal{B})$, as a Banach space.
		
		\vspace{.2cm}
		${(ii)}$ for a positive linear  functional $\psi:B_e\rightarrow\mathbb{C}$ with $\psi(e,1)=1$,
		there is a surjective  bounded linear map $T_\psi:A(\mathcal{B})\rightarrow \overline{\mathrm{span}}(P(\mathcal{B})\cap c_c(G,A^*))$.
	\end{theorem}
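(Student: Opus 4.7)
For part (i), my plan is to invoke Pedersen's identification. By Example \ref{f}(i), under the correspondence $C^*(\mathcal{B})\cong A\rtimes_\gamma G$ (which is automatic since the bundle of a $C^*$-dynamical system is saturated), $A(\mathcal{B})$ is isomorphic to Pedersen's $A(A\rtimes_\gamma G)$, and this space is by definition \cite[7.7.6-7.7.7]{pedersen} the closed linear span of the ``finite-support'' positive definite elements of $B(A\rtimes_\gamma G)$. Combined with the Gert Pedersen correspondence $B(\mathcal{B})\cong B(A\rtimes_\gamma G)$, these finite-support positive definite functions are precisely $P(\mathcal{B})\cap c_c(G,A^*)$, yielding the claimed Banach space isomorphism. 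The main technical input is checking that the Pedersen correspondence identifies the ``finite support'' condition on both sides, which is essentially a translation between the two notational frameworks.

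For part (ii), the natural candidate is
\[
T_\psi(\lambda_{\xi,\eta})_s(s,a)\;:=\;\psi\bigl(\lambda_{\xi,\eta,s}(s,a)\bigr)\;=\;\psi\bigl(\langle\lambda_s(s,a)\xi,\eta\rangle_{B_e}\bigr),
\]
for $\xi,\eta\in\ell^2(\mathcal{B})$, $s\in G$, $a\in A$. Boundedness follows from $|\psi(x)|\le\|\psi\|\|x\|$ on $B_e$ together with the defining inequality for the $A(\mathcal{B})$-norm. To see the image lands in $B(\mathcal{B})$, I would perform a scalar GNS-type construction: endow $\ell^2(\mathcal{B})$ with the semi-inner product $\langle\xi,\eta\rangle_\psi:=\psi(\langle\xi,\eta\rangle_{B_e})$, mod out the null vectors, and complete to obtain a Hilbert space $H_\psi$. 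The regular representation then descends to $\lambda^\psi\in\Sigma_c(\mathcal{B})$ on $H_\psi$, and one checks directly that $T_\psi(\lambda_{\xi,\eta})=\lambda^\psi_{[\xi],[\eta]}\in B(\mathcal{B})$; in particular $T_\psi(\lambda_{\xi,\xi})\in P(\mathcal{B})$.

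For the compact support on the dense subset, I would use the explicit formula from Example \ref{f}(i): if $\xi,\eta\in c_c(\mathcal{B})$ with supports in a finite set $K\subseteq G$, then $\lambda_{\xi,\eta,s}$ vanishes unless $s\in KK^{-1}$, and hence so does $T_\psi(\lambda_{\xi,\eta})$. By the standard polarization identity applied inside $B(\mathcal{B})$ (using the representation $\lambda^\psi$), such an image lies in $\mathrm{span}(P(\mathcal{B})\cap c_c(G,A^*))$. Combining boundedness of $T_\psi$ with density of $c_c(\mathcal{B})$ in $\ell^2(\mathcal{B})$ then yields $T_\psi(A(\mathcal{B}))\subseteq\overline{\mathrm{span}}(P(\mathcal{B})\cap c_c(G,A^*))$.

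The main obstacle is surjectivity. Given $\varphi=\pi_{\eta,\eta}\in P(\mathcal{B})\cap c_c(G,A^*)$ with support $K$, I would view $\varphi$ as a compactly supported positive functional on $C^*(\mathcal{B})\cong A\rtimes_\gamma G$ and combine two ingredients: first, the Eymard-type cutoff of Lemma \ref{eym-trick}, which produces an element $u\in A(\mathcal{B})$ implementing the characteristic-function behaviour on $K$; second, the Banach space isomorphism of part (i), which guarantees that $\varphi$ already lies in the correct target subspace and hence admits an approximation by finite sums $\sum c_i T_\psi(\lambda_{\xi_i,\xi_i})$ built from the $\psi$-GNS vectors $[\xi_i]\in H_\psi$ matched against the cyclic vector attached to $\varphi$. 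The delicate point will be matching the cyclic vector of the GNS representation of $\varphi$ with an element of $H_\psi$, which is the Fell-bundle analogue of the classical fact that a compactly supported positive definite function on a group is realizable as a coefficient of the regular representation; compact support, rather than any amenability hypothesis, is what makes this work in the crossed product setting.
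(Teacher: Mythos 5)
There are genuine gaps in both parts, and they have a common source: you never use the one construction that makes this theorem work, namely the combination of the Eymard cutoff (Lemma \ref{eym-trick}) with the module property $A(\mathcal{B})\cdot B(\mathcal{B})\subseteq A(\mathcal{B})$. For part (i), your argument is circular: you cite Example \ref{f}(i) for the identification of $A(\mathcal{B})$ with Pedersen's span closure of finite-support positive definite elements, but the paper explicitly flags that passage as ``vague ideas'' to be made rigorous in Section 4 --- and this theorem \emph{is} that rigorous version. The substantive point you must prove is that a compactly supported $\phi\in P(\mathcal{B})$, a priori only a coefficient of some representation, actually lies in $A(\mathcal{B})$, i.e.\ is realizable from the regular representation. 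The paper does this constructively: with $K=\mathrm{supp}(\phi)$, Lemma \ref{eym-trick} produces $u_\phi=\frac{1}{|V|}\lambda_{1_V,1_{KV}}\in A(\mathcal{B})$ acting as a cutoff on $K$, and then $T(\phi):=u_\phi\phi$ lies in $A(\mathcal{B})$ by Lemma \ref{representation} and equals $(e,1)\phi$ on $K$, giving the isometric identification. Deferring this to ``a translation between notational frameworks'' skips the entire proof.

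For part (ii), your definition of $T_\psi$ agrees with the paper's (up to the adjoint the paper inserts to get positivity), and your GNS-type localization of $\ell^2(\mathcal{B})$ via $\langle\xi,\eta\rangle_\psi=\psi(\langle\xi,\eta\rangle_{B_e})$ is a clean and correct way to see that $T_\psi(\lambda_{\xi,\xi})\in P(\mathcal{B})$ --- arguably tidier than the paper's appeal to Proposition \ref{positivity} --- and the compact-support and polarization steps match the paper. But surjectivity, which you yourself label the main obstacle, is left unproved, and the route you sketch (matching the GNS cyclic vector of $\varphi$ with a vector in $H_\psi$) is both harder than necessary and not obviously executable. Once part (i) is actually established, surjectivity is essentially immediate: for $\phi\in P(\mathcal{B})\cap c_c(G,A^*)$ the element $u_\phi\phi$ already lies in $A(\mathcal{B})$, and
\[
T_\psi(u_\phi\phi)_s(s,a)=\psi\bigl((e,1)\,\phi_s(s,a)\bigr)=\phi_s(s,a)\,\psi(e,1)=\phi_s(s,a),
\]
using the normalization $\psi(e,1)=1$. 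No identification of cyclic vectors is needed; the preimage is handed to you by the same cutoff element used in part (i).
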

	\begin{proof}
		${(i)}$
		Given $\phi\in P(\mathcal{B})\cap c_c(G,A^*)$ with $K=\mathrm{supp}(\phi)$, by Lemma \ref{eym-trick}, there exists
		an element
		$u_{\phi}\in A(\mathcal{B})$ with $(u_\phi)_s(s,a)=(e,a^*)$, for  $s\in K$ and $a\in A$.	
		Define $T:\overline{\mathrm{span}}~P(\mathcal{B})\cap c_c(G,A^*)\rightarrow A(\mathcal{B})$
		by $T(\phi)=u_\phi\phi$.
		
		For $\phi\in  P(\mathcal{B})\cap c_c(G,A^*)$, there are $\xi,\eta\in\ell^2(\mathcal{B})$ and $\pi_{\alpha,\alpha}\in B(\mathcal{B})$ such that
		$u_\phi=\lambda_{\xi,\eta}$ and $\phi=\pi_{\alpha,\alpha}$. Thus, for every $s\in K$,
		\begin{eqnarray*}
			(u_\phi\phi)_s(s,a)&=&(\lambda_{\xi,\eta}\pi_{\alpha,\alpha})_s(s,a)\\
			&=&\langle\lambda_s(s,1)\xi,\eta\rangle\langle\pi_s(s,a)\alpha,\alpha\rangle\\
			&=&(e,1)\langle\pi_s(s,a)\alpha,\alpha\rangle\\
			&=&(e,1)\phi_s(s,a).
		\end{eqnarray*}
		Therefore, $T(\phi)=(e,1)\phi$, and it $T$ is linear with $\|T(\phi)\|=\|\phi\|$.
		Moreover, for $\pi_{\alpha,\beta}$ and $\rho_{\alpha',\beta'}$   in $B(\mathcal{B})$,
		\begin{eqnarray*}
			(T(\pi_{\alpha,\alpha}\rho_{\beta,\beta}))_s(s,a)&=&(e,1)(\pi_{\alpha,\alpha}\rho_{\beta,\beta})_s(s,a)\\
			&=&	(e,1)\langle\pi_s(s,1)\alpha,\alpha\rangle(e,1)\langle\rho_s(s,a)\beta,\beta\rangle\\
			&=&(T(\pi_{\alpha,\alpha})T(\rho_{\beta,\beta}))_s(s,a).
		\end{eqnarray*}
		Thus, $T$ is homomorphism.
		
		$(ii)$ Given $\xi\in c_c(\mathcal{B})$, consider $$\varphi_\xi:\ell^1(\mathcal{B})\rightarrow B_e; \ \varphi_\xi(f)=\overline{\langle\lambda(f)\xi,\xi\rangle}=\overline{\sum_{r \in G}\langle\lambda_r(f_r)\xi,\xi\rangle}.$$ One may extend
		$\varphi_\xi$ to a positive linear map on $C^*(\mathcal{B})$.
		Put $T_\psi(\lambda_{\xi,\xi})(f)=\psi\circ\phi_\xi(f)$. Then,
		$T_\psi(\lambda_{\xi,\xi})$ is a positive linear functional on $C^*(\mathcal{B})$ and
		$T_\psi(\lambda_{\xi,\xi})_s(s,a)=\psi(\overline{\langle\lambda_s(s,a)\xi,\xi\rangle})$. Moreover, by Lemma \ref{positivity}, we have
		$T_\psi(\lambda_{\xi,\xi})\in P(\mathcal{B})\cap c_c(G,A^*).$

		For $\lambda_{\xi,\eta}\in A(\mathcal{B})$
		let $\varphi_{\xi,\eta}:C^*(\mathcal{B})\rightarrow B_e$ be defined by $\varphi_{\xi,\eta}(f)=\overline{\langle\lambda(f)\xi,\eta\rangle}$
		and put $T_\psi(\lambda_{\xi,\eta})=\psi\circ\phi_{\xi,\eta}$.
		There are sequences $(\xi_n)_n$ and $(\eta_n)_n$	in $c_c(\mathcal{B})$ such that
		$\|\lambda_{\xi_n,\eta_n}-\lambda_{\xi,\eta}\|_{A(\mathcal{B})}\rightarrow 0$. Moreover,
		$$
		\lambda_{\xi_n,\eta_n}=\frac{1}{4}\sum_{k=0}^{3}i^k\lambda_{i^k\xi_n+\eta_n,i^k\xi_n+\eta_n}.
		$$
		Therefore, for each $n$,  $T(\lambda_{\xi_n,\eta_n})\in \mathrm{span}~P(\mathcal{B})\cap c_c(G,A^*).$
		
		A straightforward calculation gives
		$\|T_\psi(\lambda_{\xi_n,\eta_n})-T_\psi(\lambda_{\xi,\eta})\|_{B(\mathcal{B})}\rightarrow 0$, that is,
		$T_\psi(\lambda_{\xi,\eta})\in \overline{\mathrm{span}}~P(\mathcal{B})\cap c_c(G,A^*)$.
		
		Finally, if $\phi\in P(\mathcal{B})\cap c_c(G,A^*)$,  by Lemma \ref{eym-trick}, there exists $u_\phi\in A(\mathcal{B})$ such that
		$u_\phi\phi=(e,1)\phi\in A(\mathcal{B})$. Thus, $$T_\psi(u_\phi\phi)_s(s,a)=\psi((e,1)\phi_s(s,a))=\phi_s(s,a),$$
		therefore, $T_\psi$ is onto.
	\end{proof}
	
	We close this section by proving one side of the celebrated Leptin's theorem (cf.,  \cite{Eymard64}) for Fell bundles coming from actions. We leave the inverse implication as an open problem.
	
	\begin{theorem}
		Let $G$ be an amenable discrete group and
		$\mathcal{B}$ be the Fell bundle of a $C^*$-dynamical system $(A,G,\gamma)$ with $A$ unital. Then
		$A(\mathcal{B})$ has a bounded approximate identity.	
	\end{theorem}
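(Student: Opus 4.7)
The plan is to adapt the classical proof of Leptin's theorem: use amenability to obtain scalar Reiter--F{\o}lner vectors in $\ell^2(G)$ and lift them to the Fell bundle using the unit of $A$. Fix a F{\o}lner net $(V_i)_i$ of finite subsets of $G$ and set $\xi_i = |V_i|^{-1/2}\chi_{V_i}\in\ell^2(G)$. Define $\eta_i\in c_c(\mathcal{B})\subseteq\ell^2(\mathcal{B})$ by $\eta_i(s) = (s, \xi_i(s)\cdot 1_A)$. A direct computation in the Hilbert $B_e$-module structure gives $\langle\eta_i,\eta_i\rangle = (e, 1_A)$, so $\|\eta_i\| = 1$; consequently $e_i := \lambda_{\eta_i,\eta_i}\in A(\mathcal{B})$ satisfies $\|e_i\|_{A(\mathcal{B})}\leq\|\eta_i\|^2 = 1$, and $(e_i)_i$ is my candidate for a bounded approximate identity.

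The key calculation is that for any $v = \lambda_{\xi,\eta}\in A(\mathcal{B})$, using the coproduct $\Delta_{s,s}(s^2,a) = (s,1)\otimes(s,a)$ introduced just before the theorem together with the $A(\mathcal{B})$-product formula of Proposition~4.1,
\[
(e_i v)_s(s,a) = \langle\lambda_s(s,a)\xi,\,\lambda_e(\langle\lambda_s(s,1)\eta_i,\eta_i\rangle)\eta\rangle = \alpha_i(s)\,v_s(s,a),
\]
where $\alpha_i(s) := |sV_i\cap V_i|/|V_i| = \langle\lambda^G_s\xi_i,\xi_i\rangle_{\ell^2(G)}$ is the classical F{\o}lner coefficient in $A(G)$. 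The crucial input is that $\langle\lambda_s(s,1)\eta_i,\eta_i\rangle = (e, \alpha_i(s)\cdot 1_A)$ is a scalar multiple of the unit of $B_e$, on which $\lambda_e$ acts by scalar multiplication. Thus multiplication by $e_i$ in $A(\mathcal{B})$ reproduces the $A(G)$-module action of $\alpha_i$, and $\alpha_i(s)\to 1$ pointwise on $G$ by amenability, with $\|\alpha_i\|_{A(G)}=1$.

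Next I argue by density. Elements $\lambda_{\xi,\eta}$ with $\xi,\eta\in c_c(\mathcal{B})$ are dense in $A(\mathcal{B})$ (by the inequality $\|\lambda_{\xi,\eta}\|\leq\|\xi\|\|\eta\|$ and density of $c_c(\mathcal{B})$ in $\ell^2(\mathcal{B})$), and such $v$ have finite support $L\subseteq G$ (as observed in the $c_0$-embedding proposition of Section~3). For such $v$, both $e_iv$ and $v$ vanish outside $L$, so $e_iv - v = v\cdot\beta_i$ with $\beta_i := (\alpha_i - 1)\chi_L$; being finitely supported, $\beta_i\in A(G)$ and $\|\beta_i\|_{A(G)}\leq\|\beta_i\|_{\ell^2(G)}\leq\sqrt{|L|}\,\max_{s\in L}|\alpha_i(s) - 1|\to 0$. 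Provided the $A(G)$-module action on $A(\mathcal{B})$ satisfies the continuity estimate $\|v\cdot\alpha\|_{A(\mathcal{B})}\leq\|v\|_{A(\mathcal{B})}\,\|\alpha\|_{A(G)}$, this forces $\|e_iv - v\|_{A(\mathcal{B})}\to 0$ for finitely supported $v$, and density extends the convergence to all of $A(\mathcal{B})$.

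The main obstacle I anticipate is precisely this module-action continuity, which Section~3 asserts but does not prove explicitly. The natural route is a Fell-absorption-type observation: $v\cdot\alpha$ is a coefficient of $\lambda\otimes\lambda^G$ on $\ell^2(\mathcal{B})\otimes\ell^2(G)$, and absorption should identify $\lambda\otimes\lambda^G$ with a multiple of the regular representation $\lambda$, yielding the required bound. Alternatively, for the finite-support case above, one can realize $v\cdot\beta_i$ directly as $\lambda_{\xi',\eta'}$ for explicit sections $\xi',\eta'\in\ell^2(\mathcal{B})$ with $\|\xi'\|\|\eta'\|\to 0$, bypassing the general statement.
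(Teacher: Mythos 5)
Your strategy is the same one the paper uses -- the Leptin/F{\o}lner argument, with an approximate identity built from coefficients $\lambda_{\zeta,\zeta'}$ of the regular representation where $\zeta,\zeta'$ are indicator functions of F{\o}lner sets tensored with $1_A$, resting on the observation that $\langle\lambda_s(s,1)\zeta,\zeta'\rangle$ is a scalar multiple of $(e,1_A)$, so that left multiplication by your candidate acts as a scalar multiplier on $v_s(s,a)$. The difference is the choice of vectors, and it is exactly where your argument has a genuine gap. You take the symmetric pair $\eta_i=|V_i|^{-1/2}\chi_{V_i}\cdot 1_A$, so your multiplier is the \emph{non-constant} function $\alpha_i(s)=|sV_i\cap V_i|/|V_i|$, and to conclude you need the module estimate $\|v\cdot\beta_i\|_{A(\mathcal{B})}\le\|v\|_{A(\mathcal{B})}\|\beta_i\|_{A(G)}$. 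Nothing in the paper supplies this: the $A(G)$-module structure in Section~3 is defined but never normed, and neither of your proposed repairs (a Fell-absorption theorem for bundle representations, or an explicit realization of $v\cdot\beta_i$ as $\lambda_{\xi',\eta'}$ with $\|\xi'\|\,\|\eta'\|\to 0$) is actually carried out. As written, the proof does not close.

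The paper sidesteps the issue by an asymmetric choice: given $K=\mathrm{supp}(v)$ and $\epsilon>0$ it picks a finite $C$ with $|KC|<(1+\epsilon)|C|$ and sets $u_{K,\epsilon}=\frac{1}{(1+\epsilon)|C|}\lambda_{1_C,1_{KC}}$. Then $(u_{K,\epsilon})_s(s,1)=\frac{1}{1+\epsilon}(e,1)$ is the \emph{same constant for every} $s\in K$, so $u_{K,\epsilon}v-v=-\frac{\epsilon}{1+\epsilon}v$ exactly, and no module-norm inequality is needed; the bound $\|u_{K,\epsilon}\|\le |KC|^{1/2}/((1+\epsilon)|C|^{1/2})\le 1$ comes from the same F{\o}lner condition. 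If you prefer to keep your symmetric vectors, the cheapest repair is the second one you hint at, done pointwise: for finitely supported $v$ write $e_iv-v=\sum_{s\in L}(\alpha_i(s)-1)\,v\cdot\delta_s$, observe that each $v\cdot\delta_s$ lies in $A(\mathcal{B})$ with $\|v\cdot\delta_s\|\le\|v\|$ by the Eymard-trick lemma (Lemma~\ref{eym-trick}, applied with $K=\{s\}$) together with submultiplicativity, and use $|L|<\infty$ and $\alpha_i\to 1$ pointwise; this avoids any general $A(G)$-module bound. Finally, note that because the coproduct $\Delta_{s,t}(st,a)=(s,1)\otimes(t,a)$ is not symmetric, your $(e_i)$ is only a \emph{left} approximate identity; the paper passes to a two-sided one by invoking the Banach $*$-algebra structure of $A(\mathcal{B})$, a step you omit.
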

	\begin{proof}
		Given $K\subseteq G$ compact and $\epsilon>0$, since $G$ is amenable,
		one may find a finite set $C$ such that $|KC|<(1+\epsilon)|C|$ \cite[Theorem 7.9]{pier}. Next, consider the  collection
		of maps
		$u_{K,\epsilon}\in A(\mathcal{B})\cap c_c(G,\mathcal{L}(\mathcal{B},B_e))$,  defined by
		$$
		u_{K,\epsilon}=\frac{1}{(1+\epsilon)|C|}\lambda_{1_C,1_{KC}}.
		$$
		If $\mathcal{K}(G)$ denote the set of all finite subsets of $G$, with preorder on the set $\mathcal{K}(G)\times\mathbb{R}^+$	defined by
		$(K,\epsilon)<(K',\epsilon')$, whenever $K\subseteq K'$ and $\epsilon>\epsilon'$, then
		$$
		\|u_{K,\epsilon}\|_{A(\mathcal{B})}\leq \frac{1}{(1+\epsilon)|C|}\|1_C\|_2\|1_{KC}\|_2=
		\frac{|KC|^{1/2}}{(1+\epsilon)|C|^{1/2}}\leq\frac{\sqrt{1+\epsilon}}{1+\epsilon}\leq 1.
		$$
		Now let $v=\lambda_{\xi,\eta}\in A(\mathcal{B})\cap c_c(G,\mathcal{L}(\mathcal{B},B_e))$, and set
		$K=\mathrm{supp}(v)$. Then for  $s\in K$,
		\begin{eqnarray*}
			(u_{K,\epsilon})_s(s,a)&=&\frac{1}{(1+\epsilon)|C|}\langle\lambda_s(s,a)1_C,1_{KC}\rangle\\
			&=&	\frac{1}{(1+\epsilon)|C|}\sum_{r \in G}(1_C(s^{-1}r))^*(s^{-1},a^*)1_{KC}(r)\\
			&=&\frac{(e,a^*)}{(1+\epsilon)|C|}\sum_{r \in C}1_{KC}(sr).
		\end{eqnarray*}
		It follows that
		\begin{eqnarray*}
			((u_{K,\epsilon})v)_s(s,a)&=&\langle\lambda_s(s,a)\xi,\lambda_e((u_{K,\epsilon})_s(s,1))\eta\rangle\\
			&=&\langle\lambda_s(s,a)\xi,\lambda_e(\frac{(e,1)}{1+\epsilon})\eta\rangle\\
			&=&\frac{1}{1+\epsilon}v_s(s,a),	
		\end{eqnarray*}
		that is,  $\|u_{K,\epsilon}v-v\|_{A(\mathcal{B})}=\frac{\epsilon}{\epsilon+1}\|v\|_{A(\mathcal{B})}$.
		Thus, $\lim_{(K,\epsilon)}\|u_{K,\epsilon}v-v\|_{A(\mathcal{B})}=0$, for
		$v\in A(\mathcal{B})\cap c_c(G,\mathcal{L}(\mathcal{B},B_e))$.
		
		Next, given $w=\lambda_{\xi,\eta}\in A(\mathcal{B})$, there are sequences $(\xi_n)$ and $(\eta_n)$
		in $c_c(\mathcal{B})$ such that $\|\lambda_{\xi_n,\eta_n}-\lambda_{\xi,\eta}\|_{A(\mathcal{B})}\rightarrow 0$.
		Thus, there is a $v\in A(\mathcal{B})\cap  c_c(G,\mathcal{L}(\mathcal{B},B_e))$ such that $\|w-v\|<\epsilon$. Finally,
		\begin{eqnarray*}
			\left\|u_{K,\epsilon} w-w\right\|_{A(\mathcal{B})} &=&\left\|u_{K, \epsilon} w-u_{K, \epsilon} v+u_{K, \epsilon} v-v+v-w\right\|_{A(\mathcal{B})}\\
			&\leq&\left\|u_{K, \epsilon} w-u_{K, \epsilon} v\right\|_{A(\mathcal{B})}+\left\|u_{K, \epsilon} v-v\right\|_{A(\mathcal{B})}+\|v-w\|_{A(\mathcal{B})} \\
			& \leq&\left\|u_{K, \epsilon}\right\|_{A(\mathcal{B})}\|w-v\|_{A(\mathcal{B})}+\left\|u_{K, \epsilon} v-v\right\|_{A(\mathcal{B})}+\|v-w\|_{A(\mathcal{B})} \\
			& \leq&\left\|u_{K, \epsilon} v-v\right\|_{A(\mathcal{B})}+2 \epsilon,
		\end{eqnarray*}	
		namely, $(u_{K,\epsilon})$ is a left bounded approximate identity for $A(\mathcal{B})$. Since
		$A(\mathcal{B})$ is a Banach $*$-algebra, it follows that $A(\mathcal{B})$ has a two-sided bounded approximate identity.
	\end{proof}

	
	

	
	\bibliographystyle{amsplain}

\end{document}